\numberwithin{equation}{section}
\definecolor{mygreen}{rgb}{0.09,0.62,0.45}
\definecolor{myorange}{rgb}{0.83,0.37,0.10}
\definecolor{myblue}{rgb}{0.06,0.45,0.69}
\definecolor{myred}{rgb}{0.94,0.20,0.13}
\newtheorem{thm}{Theorem}
\newtheorem{lem}[thm]{Lemma}
\newtheorem{cor}[thm]{Corollary}
\theoremstyle{definition}
\newtheorem{rem}{Remark}
\newcommand{\keywords}[1]{{\scriptsize \noindent \textbf{KEY WORDS AND PHRASES:}} {#1}}
\newcommand{\msc}[1]{{\scriptsize \noindent \textbf{MSC2010 SUBJECT CLASSIFICATIONS:}} {#1}}
\newenvironment{pfofCor}{\noindent{\bf Proof of Corollary}}{\hfill \ding{111} \\}
\newenvironment{pfofThm}{\noindent{\bf Proof of Theorem}}{\hfill \ding{111} \\}
\newcommand{\id}{\ensuremath{\displaystyle{\mathop {=} ^d}}}
\newcommand{\field}[1]{\mathbb{#1}}
\newcommand{\real}{\ensuremath{{\field{R}}}}
\newcommand{\mc}[1]{{\ensuremath{\mathcal{#1}}}}
\newcommand{\tci}[1]{\ensuremath{c(\frac{i}{n},{#1})}}
\newcommand{\tcione}[1]{\ensuremath{c(\frac{i_1}{n},{#1})}}
\newcommand{\tcitwo}[1]{\ensuremath{c(\frac{i_2}{n},{#1})}}
\newcommand{\ci}[1]{\ensuremath{c\Bigl(\frac{i}{n},{#1}\Bigr)}}
\newcommand{\sumab}[2]{\ensuremath{\sum\limits_{#1}^{#2}}}
\newcommand{\intab}[2]{\ensuremath{\int_{#1}^{#2}}}
\newcommand{\intzero}[1]{\ensuremath{\int_{0}^{#1}}}
\newcommand{\arrowf}[1]{\ensuremath{\displaystyle {\mathop {\longrightarrow}_{#1 \rightarrow \infty}\,}}}
\newcommand{\limit}[1]{\ensuremath{\displaystyle {\lim_{#1 \rightarrow{\infty}}}}}
\newcommand{\suprem}[1]{\ensuremath{\displaystyle {\sup_{#1}}}}
\newcommand{\conv}[1]{\ensuremath{\, \displaystyle {\mathop {\longrightarrow} ^{#1}}}\, }
\newcommand{\argmax}{\mathop{\mathrm{argmax}}}
\newcommand{\ndivk}{\frac{n}{k}}
\newcommand{\one}{\mathds{1}}
\renewcommand{\mathbf}{\boldsymbol}
\newcommand{\abs}[1]{\left\vert#1\right\vert}
\newcommand{\set}[1]{\left\{#1\right\}}
\newcommand{\suit}[1]{\left(#1\right)}
\newcommand{\eps}{\varepsilon}
\title{Spatial dependence and space-time trend in extreme events}
\author{
 {John H.J. Einmahl}\\  {\small Tilburg University}\\
 \and {Ana Ferreira}\\  {\small Instituto Superior T\'{e}cnico, University of Lisbon}\\
 \and {Laurens de Haan}\\ {\small University of Lisbon}\\ {\small Erasmus University Rotterdam}\\
  \and {Cl\'{a}udia Neves}\\ {\small University of Reading}\\
  \and {Chen Zhou}\\  {\small Erasmus University Rotterdam}\\  {\small Bank of The Netherlands}\\
}%
\date{}
\begin{document}

\maketitle


\abstract{The statistical theory of extremes is extended to observations that are non-stationary and not independent. The non-stationarity over time and space is controlled via the scedasis (tail scale) in the marginal distributions. Spatial dependence stems from multivariate extreme value theory. We establish asymptotic theory for both the weighted sequential tail empirical process and the weighted tail quantile process based on all observations, taken over time and space. The results yield two statistical tests for homoscedasticity in the tail, one in space and one in time. Further, we show that the common extreme value index can be estimated via a pseudo-maximum likelihood procedure based on pooling all (non-stationary and dependent) observations. Our leading example and application is rainfall in Northern Germany.}

\vspace{0.2cm}

\keywords{Multivariate extreme value statistics, non-identical distributions, sequential tail empirical process, testing.}
\vspace{0.2cm}

\msc{Primary 62G32, 62G30, 62G05, 62G10, 62G20; secondary  60F17, 60G70.}



\section{Introduction}
\label{Sec.Intro}

Within the domain of attraction of an extreme value distribution one can distinguish equivalence classes via the concept of scedasis \citep{EinmahlEtAl2016,deHaanetal2015}. The distribution function $F$ has scedasis $c$ (a positive, finite constant) with respect to the continuous distribution function $F_0$ if
\begin{equation*}
	\lim_{x\uparrow x^*}\frac{1-F(x) }{1-F_0(x)}=c,
\end{equation*}
where $x^*$ is the right endpoint of $F_0$. The equivalence class consists of all probability distributions that have a scedasis with respect to the same distribution function $F_0$.

In a temporal or a spatial context -- with independent observations -- a natural estimator of the scedasis function and its asymptotic properties are known \citep{EinmahlEtAl2016}. The present paper sets out to extend the results to a situation with dependent observations as follows. Our leading example concerns daily rainfall in Northern Germany, with measurements taken at 49 stations over 84 years. All 49 time series of daily data are split into two seasons, summer and winter, each of which comprising at least 150 days so as to stay clear of transition periods. There is evidence that the extreme value index remains constant throughout the region and time span we chose to focus on. The distribution functions of the $49\times 84 \times 150$ observations are assumed to be in one scedasis class. The distribution of the rainfall vector on any day is assumed to be in the domain of attraction of a multivariate extreme value distribution. We may assume independence in time. For the assumption of constant extreme value index and independence in time we refer to \citet{Buishand2008}, \citet{KleinTanketal2009}, and the references therein.

Consider \textit{independent} random vectors $\bigl(X _{i,1}, X _{i,2}, \ldots, X _{i,m}\bigr)$, $i=1, 2, \ldots, n$. In the rainfall context, ``$m$'' is the number of stations and ``$i$'' is time (in days). A key assumption is the existence of scedasis: for some continuous distribution function $F_0$ in the domain of attraction of an extreme value distribution
\begin{equation}\label{CondTrends}
\lim_{x\uparrow x^*}\frac{1-F_{i,j}(x)}{1-F_0(x)}= \ci{j} \in (0,\, \infty),
\end{equation}
holds for $i= 1,2,\ldots, n$, $j=1,2,\ldots,m$, where $F_{i,j}$ is the distribution function of $X_{i,j}$ and the scedasis $c(\cdot,j)$ a positive continuous function for each $j$. In order to ensure that the function $c$ is uniquely defined we impose the condition
\begin{equation*}
	\sum_{j=1}^{m}C_j(1)=1,
\end{equation*}
where for $0 \leq t\leq 1$ and $j=1,2\ldots,m$,
\begin{equation*}
	C_j(t):= \frac{1}{m}\intab{0}{t} c(u,j)\, du.	
\end{equation*}
The scedasis $c(i/n, j)$ can be interpreted as the relative frequency of extremes at time $i$ and location $j$.

We assume $F_0 \in \mc{D}(G_{\gamma})$, $\gamma \in \real$, i.e., $F_0$ is in the max-domain of attraction of $G_\gamma$. As a consequence of \eqref{CondTrends}, $\gamma$ is now the common extreme-value index: $F_{i,j} \in \mc{D}(G_{\gamma})$, $i=1, \ldots, n;\, j=1, \ldots, m$. Consider \textit{independent} random vectors $\bigl(X _{i,1}, X _{i,2}, \ldots, X _{i,m}\bigr)$, $i=1, 2, \ldots, n$. In the rainfall context, ``$m$'' is the number of stations and ``$i$'' is time (in days).

Estimators $\widehat{C}_j$ for $C_j$ will be introduced and their joint asymptotic distribution derived. This will enable us to perform various tests. For each station $j$ we test whether the scedasis is changing over time. We also test whether the $C_j(1)$ are different i.e. if there are real differences in extreme rainfall over space.

Let $F_i$  be the distribution function of $\bigl(X_{i,1}, X_{i,2}, \ldots, X _{i,m}\bigr)$. Assume that the distribution function $F_{i,j}$ of $X_{i,j} $ is continuous and let $U_{i,j}(t):= F_{i,j}^{\leftarrow}\bigl(1-1/t \bigr)$, where the arrow indicates the generalized inverse function. To model the spatial dependence, we further assume that $$\widetilde{F}(x_1, x_2, \ldots, x_m):= F_i \bigl( U_{i,1}(x_1), U_{i,2}(x_2), \ldots, U_{i,m}(x_m)\bigr)$$
does not depend on $i$ and is in the domain of attraction of a multivariate extreme value distribution \citep[Chapter 6]{deHaanFerreira2006}. As a consequence, the multivariate tail dependence structure does not depend on $i$. Let $R_{j_1,j_2}$ denote the tail copula of the components $j_1$ and $j_2$:
\begin{equation*}
	R_{j_1,j_2}(x,y)= \lim_{t \downarrow 0} \frac{1}{t} P\bigl(1-F_{i,j_1}(X_{i,j_1} ) \leq tx, \, 1-F_{i,j_2}(X_{i,j_2} ) \leq ty\bigr),\quad (x,y) \in [0,\infty]^2\backslash \{(\infty, \infty)\}.
\end{equation*}

As in \citet{EinmahlEtAl2016}, the estimator of $C_j$ could be the number of exceedances over a high empirical quantile at station $j$. But, since we want to compare the $C_j$'s we want to use the same threshold for all rain stations. Consequently the common threshold will be a high empirical quantile of all $N:= n \times m$ observations taken together. Let $X _{N-k:N}$ be the $(N-k)$-th order statistic of the observations $\bigl\{ X_{i,j}\bigr\}_{i=1,\,j=1}^{n\quad m}$.
We define the estimator
\begin{equation}\label{EstCj}
	\hat{C}_j(t):= \frac{1}{k} \sumab{i=1}{nt} \one_{\bigl\{X_{i,j}> X_{N-k:N}\bigr\}}
\end{equation}
where $k$ is an intermediate sequence i.e. $k=k(n) \rightarrow \infty$, $k(n)/n \rightarrow 0$, as $n\rightarrow \infty$.

In this paper we make  the following four contributions.
\begin{enumerate}
\item We establish the joint asymptotic behavior of $\{\hat C_j(t)\}_{t\in[0,1]}$, $j=1,\ldots,m$.
\item We test $H_0:\, C_j(1)= \frac{1}{m}\;$ for all $j=1,\ldots,m$, 	i.e., the total scedasis is constant over the various locations. We perform the test by checking whether the limit vector (in distribution) of
	\begin{equation*}
		\biggl(\sqrt{k}\Bigl(\widehat{C}_1(1)-\frac{1}{m} \Bigr),\,\sqrt{k}\Bigl(\widehat{C}_2(1)-\frac{1}{m} \Bigr),\, \ldots, \sqrt{k}\Bigl(\widehat{C}_m(1)-\frac{1}{m} \Bigr)  \biggr)	 
	\end{equation*}
	has mean zero. This will be done via an adapted $\chi^2$-test.
\item We test $H_{0,j}:\, C_j(t)= t C_j(1)\;$ for $0\leq t \leq 1$ and some given $j \in \{1,\ldots, m\}$, i.e., the scedasis $c(\cdot, j)$ is constant over time. Since, under $H_{0,j}$, the limit in distribution of the process $\bigl\{\sqrt{k} \bigl( \widehat{C}_j(t) -t\widehat{C}_j(1)\bigr\}_{0\leq t\leq 1}$ is essentially Brownian bridge, we can use, e.g., a Kolmogorov-Smirnov-type statistic.
\item We establish the asymptotic behavior of the pseudo-maximum likelihood estimator of $\gamma$ based on all $n\times m$ observations.
\end{enumerate}

Crucial for these results is a joint Gaussian approximation of the $m$ sequential  tail empirical processes as well as one for the tail quantile process based on all $n
\times m$ observations, for general $\gamma \in \real.$

The outline of the paper is as follows. Section 2 gives a detailed account of the conditions and the ensuing results. These results are applied to the mentioned rainfall data in Section 3. Proofs are collected in Section 4 and partly deferred to the supplementary material, along with a simulation study showing the performance of the proposed estimation and testing procedures.

\section{Results}\label{Sec.Results}

Throughout the paper we assume the following conditions:
\begin{itemize}
\item[\textbf{(i)}]	Spatial dependence. Assume that $\widetilde{F}(x_1, x_2, \ldots, x_m):= F_i \bigl( U_{i,1}(x_1), U_{i,2}(x_2), \ldots, U_{i,m}(x_m)\bigr)$ does not depend on $i$ and is in the domain of attraction of a multivariate extreme value distribution.
\item[\textbf{(ii)}] Sharpening of the scedasis condition \eqref{CondTrends}. Assume that there exists an eventually decreasing function $A_1$ with $\lim_{t\rightarrow\infty}A_1(t)=0$ such that
\begin{equation}\label{CondSced2ndord}
\suprem{n\geq 1}\max_{i,j}\left|\frac{1-F_{i,j}(x)}{1-F_0(x)}-c\left(\frac i n,j\right)\right|=O\left(A_1\left(\frac 1{1-F_0(x)}\right)\right), \; \mbox{ as } x \uparrow x^*.
\end{equation}
\item[\textbf{(iii)}] Second order condition for $F_0$. Write $U_0(t):= F_0^{\leftarrow}\bigl(1-1/t \bigr)$. Then there exists $\gamma \in \real$, $\rho <0$ and functions $\tilde a_0$, positive, and $A_0$ not changing sign eventually satisfying  $\lim_{t\to\infty}A_0(t)=0$ such that for all $x>0$,
\begin{equation}\label{Cond2ndord}
\limit{t}\frac{ \frac{U_0(tx)-U_0(t)}{\tilde a_0(t)}-\frac{x^\gamma-1}{\gamma} }{A_0(t)} =\Psi_{\gamma,\rho}(x),
\end{equation}
where $\Psi_{\gamma,\rho}(x)$ is as in Corollary 2.3.5 of \citet{deHaanFerreira2006}.
\item[\textbf{(iv)}] Conditions on the intermediate sequence $k$. Assume, as $n\rightarrow \infty$,
\begin{eqnarray*}
    & &k\to\infty, k/n\to 0,\\
	& &\sqrt{k} A_1\Bigl(q\ndivk \Bigr) \rightarrow 0, \mbox{ for all } q>0,\\
	& &\sqrt{k} A_0\Bigl(\ndivk \Bigr) \rightarrow 0\\
	& &\sqrt{k} \sup_{|u-v|\leq \frac{1}{n}}\bigl|c(u,j)-c(v,j) \bigr| \rightarrow 0, \quad \mbox{for } j=1,2, \ldots,m.
\end{eqnarray*}
\end{itemize}


We begin with presenting two fundamental approximations, which are the basis for the main results (Theorem \ref{ThmCj}, Corollaries \ref{CorBLoc} and \ref{CorWLoc}, Theorem \ref{mle_theor}), but they are also of independent interest. Write $\gamma_+=\gamma \vee 0$, $\gamma_-= \gamma \wedge 0$. For the following theorem we need some inequalities that hold for a different formulation of \eqref{Cond2ndord} as given in Corollary 2.3.7 of \citet{deHaanFerreira2006}. Such inequalities are valid if we replace $U_0(t)$, $\tilde a_0$ and $\Psi$ with $b_0(t)$, $a_0$ and $\overline\Psi$ therein.

\begin{thm}\label{Thm.QuantProc}
	Assume conditions (i)-(iv) with $\rho <0$. Let $x_0>-1/\gamma_+$; set $x_1:= 1/(-\gamma_-)$.
	\begin{description}
\item[a) Tail empirical distribution functions]\mbox{}\\ Using a Skorokhod construction, for $0 \leq \eta < 1/2$, as $n\rightarrow \infty$, it holds almost surely,
	\begin{equation}\label{TailEmp_j}
	\begin{split}
		 \max_{1\leq j\leq m} \suprem{x_0\leq x< x_1}	 \suprem{\, 0\leq t_j \leq 1 }\, (1+\gamma x)^{\eta/\gamma}\biggl| \sqrt{k} \biggl( \frac{1}{k} \sumab{i=1}{nt_j} \one_{\Bigl\{ \frac{X_{i,j}  -b_0( \frac{N}{k}) }{a_0( \frac{N}{k}) } > x\Bigr\}} &- (1+\gamma x)^{-1/\gamma} C_j(t_j)\biggr) \\
		& -  W_j\Bigl((1+\gamma x)^{-1/\gamma}, \,C_{j}(t_j) \Bigr)\biggr| \longrightarrow 0,
	\end{split}
	\end{equation}
where $(W_1, \ldots, W_m)$ is a Gaussian vector of bivariate Wiener processes $W_j$ with a covariance matrix $\Sigma=\Sigma(s_1,s_2, t_1,t_2)$ with entries
\begin{equation*}
	 \sigma_{j_1,j_2}(s_1,s_2,t_1,t_2):=Cov\Bigl( W_{j_1}\bigl( s_1, C_{j_1}(t_1) \bigr),\, W_{j_2}\bigl( s_2, C_{j_2}(t_2) \bigr) \Bigr)\\
	= \frac{1}{m}\intzero{t_1 \wedge t_2} R_{j_1,j_2}\Bigl(s_1 c(u,j_1),\, s_2 c(u,j_2)\Bigr) \, du,
\end{equation*}
for $1\leq j_1, j_2\leq m$.

\item[b) Tail empirical quantile function]\mbox{}\\ With $\varepsilon>0$ and $X_{1:N}\leq \ldots \leq X_{N:N}$ the order statistics of the sample $\{X_{i,j} \}_{i,j}$ of all $N$ observations, we have, for $T>0$, as $n\rightarrow \infty$,
\begin{equation}\label{QuantOverall}
	\suprem{\frac{1}{2k}\leq s \leq T} \, s^{-1/2+\varepsilon} \biggl|  s^{\gamma+1} \sqrt{k} \Bigl( \frac{X_{N-[ks]: N} -b_0( \frac{N}{k}) }{a_0( \frac{N}{k}) } - \frac{s^{-\gamma}-1}{\gamma}\Bigr) - \sumab{j=1}{m} W_j\bigl(s, C_j(1) \bigr)\biggr| \conv{P} 0.
	\end{equation}
\end{description}
\end{thm}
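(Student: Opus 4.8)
\medskip
\noindent\textbf{Proof strategy.} Part (b) I would deduce from part (a) by an inversion argument, so I describe (a) first. Write $s=s(x):=(1+\gamma x)^{-1/\gamma}$, so that the weight $(1+\gamma x)^{\eta/\gamma}$ equals $s^{-\eta}$ and $x\in[x_0,x_1)$ corresponds to $s$ in a bounded interval having $0$ in its closure. The first step is to replace the deterministic location--scale threshold $b_0(\tfrac Nk)+a_0(\tfrac Nk)x$ by the single quantile $U_0\bigl(\tfrac Nk\,s^{-1}\bigr)=U_0\bigl(\tfrac Nk(1+\gamma x)^{1/\gamma}\bigr)$; by the second-order condition (iii) in the $b_0/a_0$ form of Corollary 2.3.7 of \citet{deHaanFerreira2006}, its attached Potter-type inequalities (needed for uniformity as $s\downarrow0$), and the assumption $\sqrt k\,A_0(\tfrac Nk)\to0$, this costs only a uniform $o(k^{-1/2})$ error after the $s^{-\eta}$-weighting. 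Next, writing $\one_{\{X_{i,j}>U_0(\frac Nk s^{-1})\}}=\one_{\{1-F_{i,j}(X_{i,j})<1-F_{i,j}(U_0(\frac Nk s^{-1}))\}}$ and invoking the sharpened scedasis condition (ii) with $\sqrt k\,A_1(q\,n/k)\to0$ together with $\sqrt k\sup_{|u-v|\le1/n}|c(u,j)-c(v,j)|\to0$ from (iv), one replaces $1-F_{i,j}(U_0(\tfrac Nk s^{-1}))$ by $\tfrac kN\,c(\tfrac in,j)\,s$, again up to a uniform $o(k^{-1/2})$ error. What remains is a joint weighted Gaussian approximation for the ``idealised'' array $\bigl\{\one_{\{U_{i,j}<\frac kN c(i/n,j)\,s\}}\bigr\}_{i,j}$, where the vectors $(U_{i,1},\dots,U_{i,m})$, $i=1,\dots,n$, are independent, have uniform margins, and share the copula of $\widetilde F$, which by (i) does not depend on $i$.

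For this array, $E\bigl[\tfrac1k\sum_{i\le nt_j}\one_{\{U_{i,j}<\frac kN c(i/n,j)s\}}\bigr]=\tfrac1N\sum_{i\le nt_j}c(\tfrac in,j)\,s\to s\,C_j(t_j)$ by a Riemann-sum bound controlled by (iv). Since each per-index marginal probability is $O(k/N)$, products of them are asymptotically negligible, and the $\sqrt k$-scaled cross-covariance at $(s_1,t_1)$ and $(s_2,t_2)$ is, by the definition of the tail copula,
\begin{equation*}
\frac1N\sumab{i=1}{n(t_1\wedge t_2)}R_{j_1,j_2}\bigl(s_1 c(\tfrac in,j_1),\,s_2 c(\tfrac in,j_2)\bigr)+o(1)\;\longrightarrow\;\frac1m\intzero{t_1\wedge t_2}R_{j_1,j_2}\bigl(s_1 c(u,j_1),\,s_2 c(u,j_2)\bigr)\,du=\sigma_{j_1,j_2}(s_1,s_2,t_1,t_2).
\end{equation*}
One then applies a strong, Hungarian-type Gaussian approximation for partial sums of independent summands to this array, uniformly over $j\in\{1,\dots,m\}$, $t_j\in[0,1]$ and $s$ down to $0$ with the weight $s^{-\eta}$, $\eta<1/2$; this is the $m$-dimensional, tail-copula analogue of the one-dimensional non-stationary approximation of \citet{EinmahlEtAl2016}, and is realised on a single probability space through a Skorokhod construction. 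The limiting object is the vector $(W_1,\dots,W_m)$ of bivariate Wiener processes with covariance $\Sigma$, the ``time'' variable of $W_j$ being the cumulative scedasis $C_j(t_j)$ and the ``space'' variable being $s=(1+\gamma x)^{-1/\gamma}$. This establishes \eqref{TailEmp_j}.

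For part (b), specialise \eqref{TailEmp_j} to $t_j\equiv1$ and sum over $j$; since $\sum_{j=1}^m C_j(1)=1$ the centering becomes $\psi(x):=(1+\gamma x)^{-1/\gamma}$ and the Gaussian part becomes $\mathbb W(s):=\sum_{j=1}^m W_j(s,C_j(1))$. The map $x\mapsto V_N(x):=\tfrac1k\#\{(i,j):X_{i,j}>b_0(\tfrac Nk)+a_0(\tfrac Nk)x\}$ is non-increasing, and up to index shifts of size $1$ (negligible after $\sqrt k$-scaling) its generalised inverse at level $s$ equals $\bigl(X_{N-[ks]:N}-b_0(\tfrac Nk)\bigr)/a_0(\tfrac Nk)$. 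Applying Vervaat's lemma to the pair (tail distribution function, tail quantile function), with $\psi^{\leftarrow}(s)=(s^{-\gamma}-1)/\gamma$ and $\psi'(\psi^{\leftarrow}(s))=-s^{1+\gamma}$, gives
\begin{equation*}
\sqrt k\Bigl(\frac{X_{N-[ks]:N}-b_0(\frac Nk)}{a_0(\frac Nk)}-\frac{s^{-\gamma}-1}{\gamma}\Bigr)=s^{-(1+\gamma)}\,\mathbb W(s)+o_P(1)
\end{equation*}
uniformly on compact subsets of $(0,\infty)$, which is \eqref{QuantOverall} after multiplying by $s^{\gamma+1}$. To accommodate the weight $s^{-1/2+\varepsilon}$ and push the lower limit down to $s=\tfrac1{2k}$, fix a small $\delta>0$: on $[\delta,T]$ the display above (with bounded weight) suffices, while on $[\tfrac1{2k},\delta]$, where the inverse can no longer be linearised, one bounds the weighted supremum directly by a maximal inequality for the pooled tail quantile process --- as in Section 2.4 of \citet{deHaanFerreira2006}, adapted to independent, non-identically distributed, spatially dependent observations --- and lets $n\to\infty$ and then $\delta\downarrow0$.

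The main obstacle is the joint weighted Gaussian approximation underlying part (a): the summands are independent in $i$ but neither identically distributed (the scedasis depends on $i$ and $j$) nor independent across $j$ (spatial tail dependence through $R_{j_1,j_2}$), and the approximation must hold uniformly in the sequential parameter $t_j$ and up to the possibly finite endpoint $x_1$ under the $s^{-\eta}$ weight. Converting the several second-order bias terms into genuinely uniform $o(k^{-1/2})$ bounds --- which is exactly where conditions (ii)--(iv) and the Potter-type inequalities of Corollary 2.3.7 of \citet{deHaanFerreira2006} are used --- and controlling the Gaussian approximation as $s\downarrow0$ is the bulk of the effort; in part (b) the delicate point is the same endpoint issue, namely extending validity down to $s=\tfrac1{2k}$ with the $s^{-1/2+\varepsilon}$ weight.
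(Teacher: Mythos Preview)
Your overall architecture is sound and the transformation steps in part (a) --- passing from the $b_0/a_0$ threshold to $U_0(\tfrac Nk s^{-1})$ via condition (iii), and then to the idealised threshold $\tfrac kN c(\tfrac in,j)s$ via condition (ii) --- match the paper's, as does the covariance computation. But the paper does \emph{not} build a multivariate Hungarian-type approximation directly. Instead it applies Proposition~1 of \citet{EinmahlEtAl2016} to each station $j$ separately, which already yields the weighted almost-sure approximation \emph{marginally}; joint convergence is then obtained in two cheap steps: finite-dimensional distributions via Lyapunov's CLT and the Cram\'er--Wold device (the fourth-moment sum is $O(k^{-1})$), and joint tightness from marginal tightness via the product-space result of \citet{FergerVogel2015}. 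A final Skorokhod construction upgrades weak convergence to the a.s.\ statement. This modular route avoids re-engineering a strong approximation for dependent coordinates; your proposed direct $m$-dimensional construction is conceptually fine but would be substantially more work to carry out, and is not needed.

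For part (b), your use of Vervaat's lemma on $[s_0,T]$ is exactly what the paper does, but the treatment of $[\tfrac1{2k},s_0]$ is more concrete than a generic ``maximal inequality as in Section~2.4 of \citet{deHaanFerreira2006}''. The paper first proves a dedicated lemma bounding the pooled order statistics: using the scedasis inequalities $U_0(t/M)\le U_{i,j}(t)\le U_0(Mt)$, the sandwich $Y^{(1)}_{n-[ks]:n}\le Y_{N-[ks]:N}\le \max_j Y^{(j)}_{n-[ks/m]:n}$, and the Shorack--Wellner bounds for uniform order statistics, it shows $U_0(\tfrac{aN}{ks})\le X_{N-[ks]:N}\le U_0(\tfrac{N}{aks})$ on $[\tfrac1{2k},1]$ with high probability. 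It then handles the Gaussian part via the LIL near $0$, and splits $[\tfrac1{2k},s_0]$ further at a sequence $t_n\downarrow 0$ chosen so that $\sqrt k\,t_n^{1/2+\varepsilon}$ is a fixed small constant: on $[\tfrac1{2k},t_n]$ the order-statistic sandwich plus the Potter inequalities of Corollary~2.3.7 give the bound directly, while on $(t_n,s_0]$ one plugs $x=\tfrac{s^{-\gamma}-1}{\gamma}\pm\tfrac{\delta}{\sqrt k}s^{-\gamma-1/2-\varepsilon}$ back into the tail empirical approximation and checks $\mathbb P_n(x)\lessgtr s$. This four-case analysis is where the weight $s^{-1/2+\varepsilon}$ is actually earned; your sketch underestimates how much bookkeeping is required here.
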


\begin{cor}\label{Cor.QuantProc}
Under the conditions and in the setup of Theorem \ref{Thm.QuantProc}, with $\varepsilon>0$,
\begin{equation}\label{QuantOverallCor}
	\suprem{\frac{1}{2k}\leq s \leq T} \, \bigl( 1 \wedge s^{\gamma+1/2+\varepsilon}\bigr) \biggl| \sqrt{k} \Bigl( \frac{X_{N-[ks]:N} -X_{N-k:N} }{a_0( \frac{N}{k}) } - \frac{s^{-\gamma}-1}{\gamma}\Bigr) - \sumab{j=1}{m}\Bigl( s^{-\gamma-1}W_j\bigl(s, C_j(1)\bigr)-W_j\bigl(1, C_j(1)\bigr)\Bigr)\biggr| \conv{P} 0.
\end{equation}
\end{cor}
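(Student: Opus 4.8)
The plan is to deduce \eqref{QuantOverallCor} directly from part b) of Theorem~\ref{Thm.QuantProc} by a subtraction argument. Abbreviate, for $\frac{1}{2k}\le s\le T$,
\begin{equation*}
	Q_n(s):=\sqrt{k}\Bigl(\frac{X_{N-[ks]:N}-b_0(\frac{N}{k})}{a_0(\frac{N}{k})}-\frac{s^{-\gamma}-1}{\gamma}\Bigr).
\end{equation*}
Because $[k\cdot 1]=k$ and $(1^{-\gamma}-1)/\gamma=0$, the recentring statistic satisfies $\sqrt{k}\,(X_{N-k:N}-b_0(N/k))/a_0(N/k)=Q_n(1)$, so the expression inside the absolute value in \eqref{QuantOverallCor} is exactly $Q_n(s)-Q_n(1)-\sum_{j=1}^m\bigl(s^{-\gamma-1}W_j(s,C_j(1))-W_j(1,C_j(1))\bigr)$. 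If $T<1$ one first applies Theorem~\ref{Thm.QuantProc}b) with $T$ replaced by $T\vee 1$, which is legitimate since that theorem holds for every $T>0$, so that $s=1$ belongs to the range.

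Next I would rewrite \eqref{QuantOverall} as: there is $\delta_n\conv{P}0$ with
\begin{equation*}
	\sup_{\frac{1}{2k}\le s\le T}\, s^{-1/2+\varepsilon}\Bigl|\, s^{\gamma+1}Q_n(s)-\sum_{j=1}^m W_j(s,C_j(1))\,\Bigr|\le\delta_n .
\end{equation*}
Dividing by $s^{\gamma+1}$ and putting $r_n(s):=Q_n(s)-s^{-\gamma-1}\sum_{j=1}^m W_j(s,C_j(1))$, this says precisely that $s^{\gamma+1/2+\varepsilon}|r_n(s)|\le\delta_n$ uniformly, and in particular $|r_n(1)|\le\delta_n$. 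Substituting $Q_n(s)=s^{-\gamma-1}\sum_j W_j(s,C_j(1))+r_n(s)$ and $Q_n(1)=\sum_j W_j(1,C_j(1))+r_n(1)$, the expression in \eqref{QuantOverallCor} collapses to $r_n(s)-r_n(1)$, so it remains to prove $\sup_s\,(1\wedge s^{\gamma+1/2+\varepsilon})\,|r_n(s)-r_n(1)|\conv{P}0$.

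This last bound is elementary bookkeeping. For the $r_n(s)$ part: when $s^{\gamma+1/2+\varepsilon}\le 1$ one has $(1\wedge s^{\gamma+1/2+\varepsilon})|r_n(s)|=s^{\gamma+1/2+\varepsilon}|r_n(s)|\le\delta_n$; when $s^{\gamma+1/2+\varepsilon}>1$ one has $(1\wedge s^{\gamma+1/2+\varepsilon})|r_n(s)|=|r_n(s)|\le s^{-\gamma-1/2-\varepsilon}\bigl(s^{\gamma+1/2+\varepsilon}|r_n(s)|\bigr)\le\delta_n$. For the $r_n(1)$ part, $(1\wedge s^{\gamma+1/2+\varepsilon})|r_n(1)|\le|r_n(1)|\le\delta_n$. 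Hence the remainder is at most $2\delta_n\conv{P}0$ uniformly over $\frac{1}{2k}\le s\le T$, which is exactly \eqref{QuantOverallCor}.

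I do not expect a genuine obstacle: the corollary follows almost mechanically from Theorem~\ref{Thm.QuantProc}b). The two points that warrant a word are (a) recognising that the recentring quantity $\sqrt{k}\,(X_{N-k:N}-b_0(N/k))/a_0(N/k)$ is literally $Q_n(1)$, so that the \emph{same} Gaussian vector $(W_1,\ldots,W_m)$ governs both the generic-$s$ term and the recentring term and no new randomness is introduced; and (b) observing that the weight $1\wedge s^{\gamma+1/2+\varepsilon}$ in \eqref{QuantOverallCor} is precisely the one that absorbs the factor $s^{-\gamma-1}$ produced when the weighted bound \eqref{QuantOverall} — which carries weight $s^{-1/2+\varepsilon}$ — is divided by $s^{\gamma+1}$. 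Since \eqref{QuantOverall} holds for every $\varepsilon>0$, it suffices to invoke it with the same $\varepsilon$ as in the corollary.
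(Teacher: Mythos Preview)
Your argument is correct and is exactly the kind of direct subtraction argument the paper has in mind; the paper does not give an explicit proof of Corollary~\ref{Cor.QuantProc} because it follows immediately from Theorem~\ref{Thm.QuantProc}b) in precisely the way you describe. Your handling of the weight conversion from $s^{-1/2+\varepsilon}$ to $1\wedge s^{\gamma+1/2+\varepsilon}$ and the remark that one may take $T\vee 1$ to ensure $s=1$ is in range are both to the point.
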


As a result we get the joint asymptotic behavior of the $\hat{C}_j$, $j=1,\ldots,m$.

\begin{thm}\label{ThmCj}
Under the conditions and in the setup of Theorem \ref{Thm.QuantProc}, as $n\rightarrow \infty$,
\begin{equation}\label{first3}
	\max_{1\leq j\leq m} \suprem{0< t \leq 1} \,\Bigl| \sqrt{k} \bigl(\widehat{C}_j(t) - C_j(t)\bigr) - \Bigl\{W_j\bigl(1, C_j(t)\bigr) -C_j(t) \sumab{r=1}{m}W_r\bigl(1,C_r(1) \bigr) \Bigr\}  \Bigr| \conv{P} 0,
\end{equation}

Moreover, we have the uniform consistency of the estimator of the covariance matrix $\Sigma$ as follows. For $T>0$ and $j_1\neq j_2$, as $n\rightarrow \infty$,
\begin{equation}\label{CovEst}
	\suprem{0\leq t\leq 1}\,\,\,\suprem{0\leq s_1,s_2 \leq T}\, \Bigl| \frac{1}{k} \sumab{i=1}{nt} \one_{\bigl\{ X_{i,j_1}> X_{N-[ks_1]:N}, \,  X_{i,j_2}> X_{N-[ks_2]:N}\bigr\}}  -
\frac{1}{m}\intzero{t} R_{j_1,j_2}\Bigl(s_1 c(u,j_1),\, s_2 c(u,j_2)\Bigr) \Bigr| \conv{P} 0.
\end{equation}
\end{thm}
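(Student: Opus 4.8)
The plan is to derive \eqref{first3} by feeding the random level $\widehat x_n:=\bigl(X_{N-k:N}-b_0(N/k)\bigr)/a_0(N/k)$ into the uniform Gaussian approximation in part a) of Theorem \ref{Thm.QuantProc}, while controlling $\widehat x_n$ itself through part b). Since $\widehat C_j(t)=\frac1k\sum_{i=1}^{nt}\one_{\{(X_{i,j}-b_0(N/k))/a_0(N/k)>\widehat x_n\}}$, evaluating \eqref{QuantOverall} at $s=1$ gives $\sqrt k\,\widehat x_n=Z+o_P(1)$, where $Z:=\sum_{r=1}^m W_r\bigl(1,C_r(1)\bigr)$; in particular $\widehat x_n\to 0$ in probability, so the event $\{x_0\le\widehat x_n<x_1\}$ has probability tending to one. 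On that event I insert $x=\widehat x_n$ (with $\eta=0$) into \eqref{TailEmp_j}, which yields $\max_{1\le j\le m}\sup_{0\le t\le1}\bigl|\sqrt k(\widehat C_j(t)-(1+\gamma\widehat x_n)^{-1/\gamma}C_j(t))-W_j((1+\gamma\widehat x_n)^{-1/\gamma},C_j(t))\bigr|\to 0$ in probability.

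It remains to expand around $\widehat x_n=0$. Write $\widehat C_j(t)-C_j(t)=\{\widehat C_j(t)-(1+\gamma\widehat x_n)^{-1/\gamma}C_j(t)\}+C_j(t)\{(1+\gamma\widehat x_n)^{-1/\gamma}-1\}$. Multiplied by $\sqrt k$, the first brace equals $W_j((1+\gamma\widehat x_n)^{-1/\gamma},C_j(t))+o_P(1)$ by the previous step; since $(1+\gamma\widehat x_n)^{-1/\gamma}\to 1$ in probability, $C_j(t)$ stays in the compact set $[0,C_j(1)]$, and the paths of $W_j$ are a.s.\ uniformly continuous on compacts, this equals $W_j(1,C_j(t))+o_P(1)$ uniformly in $t$ and $j$. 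For the second brace, $(1+\gamma u)^{-1/\gamma}=1-u+O(u^2)$ as $u\to 0$ (read as $e^{-u}$ when $\gamma=0$), so $\sqrt k\{(1+\gamma\widehat x_n)^{-1/\gamma}-1\}=-\sqrt k\,\widehat x_n+O_P(\sqrt k\,\widehat x_n^2)=-\sqrt k\,\widehat x_n+o_P(1)\to-Z$ in probability, because $\sqrt k\,\widehat x_n=O_P(1)$ and $\widehat x_n=o_P(1)$. As $C_j(t)$ is bounded and deterministic, combining the two pieces gives \eqref{first3}.

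For \eqref{CovEst}, abbreviate the empirical functional on the left by $\widehat D(t,s_1,s_2)$ and its target by $D(t,s_1,s_2)$ (the indices $j_1\neq j_2$ being fixed). The first move is to replace the random thresholds by deterministic ones: restricting \eqref{QuantOverall} to $s\in[s_0,T]$ for a fixed $s_0>0$ gives $\bigl(X_{N-[ks_\ell]:N}-b_0(N/k)\bigr)/a_0(N/k)\to y(s_\ell):=(s_\ell^{-\gamma}-1)/\gamma$ uniformly in probability, so for any $\delta>0$, with probability tending to one, $X_{N-[ks_\ell]:N}$ lies between $b_0(N/k)+a_0(N/k)\,y(s_\ell^{\pm})$ for certain $s_\ell^{\pm}=s_\ell^{\pm}(\delta)\to s_\ell$ as $\delta\downarrow0$ (using that $y$ is continuous and strictly decreasing). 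Since $\widehat D$ is nondecreasing in each of $t,s_1,s_2$, this sandwiches $\widehat D(t,s_1,s_2)$ between two copies of the analogous functional with \emph{deterministic} thresholds $v_\ell=b_0(N/k)+a_0(N/k)\,y(\sigma_\ell)$, $\sigma_\ell$ near $s_\ell$. For such thresholds I would then prove (a) a uniform law of large numbers, $\sup_{t,s_1,s_2}|\widehat D-\mathbb E\widehat D|\to 0$ in probability, and (b) a bias expansion $\mathbb E\widehat D\to D$ uniformly; letting $\delta\downarrow0$ and using uniform continuity of $D$ in $(s_1,s_2)$ (uniformly in $t$) closes the argument on $[s_0,T]^2$, while the region $s_1\wedge s_2<s_0$ is handled by monotone domination, $\widehat D(t,s_1,s_2)\le\frac1k\sum_{i=1}^n\one_{\{X_{i,j_\ell}>X_{N-[ks_0]:N}\}}\to s_0C_{j_\ell}(1)$ in probability for the coordinate with $s_\ell<s_0$, which is $<\varepsilon$ for $s_0$ small, and likewise $D\le s_0(C_{j_1}(1)\vee C_{j_2}(1))$ there.

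For (a), the summands over $i$ are independent, bounded by $1$, centred, with variances summing to $O(k)$ (since $\sum_i P(X_{i,j_1}>v_1)\asymp k$ by condition (ii)), so the pointwise law of large numbers holds by Chebyshev; as $\widehat D$, $\mathbb E\widehat D$ and the continuous limit $D$ are all monotone in each argument, pointwise convergence on a sufficiently fine finite grid upgrades to uniform convergence. For (b), condition (i) makes the joint law of $\bigl(1-F_{i,j_1}(X_{i,j_1}),1-F_{i,j_2}(X_{i,j_2})\bigr)$ independent of $i$, with the rescaling converging uniformly on compacts to $R_{j_1,j_2}$; together with the uniform scedasis bound of condition (ii), which gives $1-F_{i,j_\ell}(v_\ell)=c(i/n,j_\ell)(ks_\ell/N)(1+o(1))$ uniformly, this yields $\mathbb E\widehat D(t,s_1,s_2)=\frac1{mn}\sum_{i\le nt}R_{j_1,j_2}\bigl(s_1c(i/n,j_1),s_2c(i/n,j_2)\bigr)+o(1)$, a Riemann sum converging to $D(t,s_1,s_2)$ uniformly because $R_{j_1,j_2}$ is uniformly continuous on compacts and $c(\cdot,j_\ell)$ has modulus of continuity $o(1)$ at scale $1/n$ (condition (iv)). The main obstacle is precisely the first move together with this boundary bookkeeping: converting the in-probability statement of Theorem \ref{Thm.QuantProc} b) into a genuine deterministic-threshold sandwich that is uniform in $s$, and tracking how the perturbation $\sigma_\ell\to s_\ell$ propagates into $D$; by contrast the heavy Gaussian-approximation work is entirely supplied by Theorem \ref{Thm.QuantProc}.
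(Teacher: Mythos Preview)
Your proof of \eqref{first3} is correct and essentially identical to the paper's: both plug the random level $\widehat x_n=(X_{N-k:N}-b_0(N/k))/a_0(N/k)$ into \eqref{TailEmp_j}, use \eqref{QuantOverall} at $s=1$ to identify $\sqrt{k}\,\widehat x_n$ with $\sum_r W_r(1,C_r(1))+o_P(1)$, and then Taylor-expand $(1+\gamma\widehat x_n)^{-1/\gamma}$ and exploit the path continuity of the $W_j$.

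For \eqref{CovEst} your argument is also correct and uses the same ingredients as the paper (Chebyshev for the pointwise law of large numbers, the scedasis bound plus the definition of $R_{j_1,j_2}$ for the bias, monotonicity/continuity to pass to uniformity, and \eqref{QuantOverall} to swap deterministic for random thresholds), but the order is reversed: the paper first establishes the uniform result with deterministic thresholds $(w_\ell^{-\gamma}-1)/\gamma$ on all of $[0,T]^2$ and only at the very end substitutes the random $w_\ell=(1+\gamma(X_{N-[ks_\ell]:N}-b_0(N/k))/a_0(N/k))^{-1/\gamma}$, whereas you sandwich the random thresholds first and then prove the deterministic result. Your explicit treatment of the boundary strip $s_1\wedge s_2<s_0$ is a bit more careful than the paper's terse ``uniformity follows as before''; the paper avoids that split because its deterministic result already holds down to $w_\ell=0$, so the final monotonicity/continuity step handles the whole square at once. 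Either ordering works.
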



\bigskip%
Now we proceed with the two aforementioned tests. First we discuss the testing problem
\begin{equation}\label{TestBLoc}
\begin{cases}
H_{0}:C_j(1)=\frac 1m,&  \mbox{for all } j=1,\ldots,m, \\
H_{1}:C_j(1)\neq\frac 1m,& \mbox{for some } j=1,\ldots,m.
\end{cases}
\end{equation}

Let $\one_m$ be the $m$-unit vector,  $I_m$ the identity matrix of dimension $m$ and define $M:=I_m -\frac{1}{m}\one_m \one_m'$. From Theorem \ref{ThmCj} and under $H_0$,
$D= \sqrt{k}\Big(\hat{C}_1(1)-\frac{1}{m}, \ldots, \hat{C}_m(1)-\frac{1}{m} \Bigr)'$ is asymptotically an $m$-multivariate normal with zero mean vector and covariance matrix $M\Sigma_1 M'$, where $\Sigma_1=\Sigma(1,1,1,1)$. Assume that $\Sigma_1$ is invertible. Then $\text{rank}(M\Sigma_1M')=\text{rank}(M)=m-1$.
We therefore confine attention to the first $m-1$ components of $D$ denoted by $D_{m-1}$, which has an asymptotic covariance matrix $(M\Sigma_1M')_{m-1}$. Here the notation $A_{m-1}$ refers to the first $m-1$ rows and $m-1$ columns of an $m\times m$ matrix $A$, i.e. $A_{m-1}=(I_{m-1}, \mathbf{0}_{m-1})A(I_{m-1}, \mathbf{0}_{m-1})'$. Finally, we define the test statistic
\begin{equation*}
	T_n:= D_{m-1}' \left((M\hat\Sigma_1M')_{m-1}\right)^{-1} D_{m-1},	
\end{equation*}
with $\hat\Sigma_1$ estimated  via the empirical counterpart given in \eqref{CovEst}. From Theorem \ref{ThmCj} we immediately get the asymptotic behavior of $T_n$ under $H_0$.

\begin{cor}\label{CorBLoc} Assume that $\Sigma_1$ is invertible. Then  under $H_0$, $T_{n}\conv{d} \chi_{m-1}^2$, as $n\rightarrow \infty$.
\end{cor}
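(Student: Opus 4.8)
The plan is to read off the limit law of $D_{m-1}$ from Theorem~\ref{ThmCj}, check that its asymptotic covariance is nonsingular, replace it by its consistent estimator, and then invoke the classical fact that a nondegenerate centred Gaussian vector $Y$ in $\real^{m-1}$ with covariance $\Gamma$ satisfies $Y'\Gamma^{-1}Y\sim\chi^2_{m-1}$. First I would specialise \eqref{first3} to $t=1$. Put $Z:=\bigl(W_1(1,C_1(1)),\ldots,W_m(1,C_m(1))\bigr)'$, which by the covariance formula of Theorem~\ref{Thm.QuantProc} is $N(0,\Sigma_1)$ with $\Sigma_1=\Sigma(1,1,1,1)$. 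Under $H_0$ we have $C_j(1)=\frac1m$ for all $j$, so on the one hand $D=\sqrt k\bigl(\widehat C_j(1)-C_j(1)\bigr)_{j=1}^m$, and on the other the Gaussian term in \eqref{first3} at $t=1$, $W_j(1,C_j(1))-C_j(1)\sum_{r=1}^m W_r(1,C_r(1))$, is exactly the $j$-th coordinate of $MZ$. Hence \eqref{first3} gives $\norm{D-MZ}\conv{P}0$, so $D\conv{d}MZ\sim N(0,M\Sigma_1M')$, and truncating to the first $m-1$ coordinates, $D_{m-1}\conv{d}Y$ with $Y\sim N\bigl(0,(M\Sigma_1M')_{m-1}\bigr)$.

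The one step that requires an argument -- and which I expect to be the main, if modest, obstacle -- is the invertibility of $(M\Sigma_1M')_{m-1}$, since $M\Sigma_1M'$ itself is singular of rank $m-1$. Note $\Sigma_1$, an invertible covariance matrix, is positive definite. Because $M$ is symmetric and idempotent, for $v\in\real^m$ one has $v'M\Sigma_1M'v=\norm{\Sigma_1^{1/2}Mv}^2$, which vanishes iff $Mv=0$, i.e. iff $v\in\mathrm{span}(\one_m)$; thus $\ker(M\Sigma_1M')=\mathrm{span}(\one_m)$. Writing $P:=(I_{m-1},\mathbf{0}_{m-1})'$, so that $A_{m-1}=P'AP$, for any $v\in\real^{m-1}\setminus\{0\}$ the vector $Pv$ is nonzero and has vanishing last coordinate, hence $Pv\notin\mathrm{span}(\one_m)$ and therefore $v'(M\Sigma_1M')_{m-1}v=(Pv)'M\Sigma_1M'(Pv)>0$. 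So $(M\Sigma_1M')_{m-1}$ is positive definite, in particular invertible.

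Finally I would handle the estimated covariance. The off-diagonal entries of $\widehat\Sigma_1$ are consistent by \eqref{CovEst} taken at $t=1$ and $s_1=s_2=1$, while the $j$-th diagonal entry is $\widehat C_j(1)$, which by \eqref{first3} converges in probability to $C_j(1)=\frac1m\int_0^1 c(u,j)\,du=\sigma_{j,j}(1,1,1,1)$ (using $R_{j,j}(x,x)=x$). Hence $M\widehat\Sigma_1M'\conv{P}M\Sigma_1M'$, and since $(M\Sigma_1M')_{m-1}$ is invertible the continuous mapping theorem gives $\bigl((M\widehat\Sigma_1M')_{m-1}\bigr)^{-1}\conv{P}\bigl((M\Sigma_1M')_{m-1}\bigr)^{-1}$. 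Combining this with $D_{m-1}\conv{d}Y$ via Slutsky's lemma yields $T_n\conv{d}Y'\bigl((M\Sigma_1M')_{m-1}\bigr)^{-1}Y$, which equals $\chi^2_{m-1}$ in distribution because $Y$ is a nondegenerate centred Gaussian vector in $\real^{m-1}$. This completes the plan.
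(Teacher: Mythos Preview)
Your proposal is correct and follows exactly the route the paper intends: the paper does not write out a separate proof of this corollary but states in the paragraph preceding it that $D$ is asymptotically $N(0,M\Sigma_1M')$ by Theorem~\ref{ThmCj}, that $\text{rank}(M\Sigma_1M')=m-1$, and that the result then follows ``immediately'' using the estimator from \eqref{CovEst}. Your write-up simply supplies the details the paper omits---in particular the positive-definiteness of $(M\Sigma_1M')_{m-1}$ and the consistency of the diagonal entries of $\widehat\Sigma_1$ (which \eqref{CovEst} covers only for $j_1\neq j_2$)---and these are handled correctly.
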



Next we consider, for $j\in \{1, \ldots,m\}$, the testing problem $H_{0,j}:\, C_j(t)= tC_j(1)$, $0\leq t \leq 1$, and $H_{1,j}$ that this is not the case. We can use test statistics of the Kolmogorov-Smirnov-type or Cram\'er-von Mises-type based on the process $\sqrt{k}\bigl(\hat{C}_j(t)-t \hat{C}_j(1)\bigr)\bigl/ \sqrt{\hat{C}_j(1)}$, $0 \leq t \leq 1$.
\begin{cor}\label{CorWLoc}
	Fix $j \in \{1, \ldots, m\}$. Under the hypothesis that $C_j(t)= tC_j(1)$, for $0\leq t \leq 1$,
	\begin{equation*}
	\biggl\{\sqrt{k\,\hat{C}_j(1)} \,\Bigl( \frac{\hat{C}_j(t)}{\hat{C}_j(1)}\, - \,t \Bigr)  \biggr\}_{t\in [0,1]}	 \conv{d}  \bigl\{B(t) \bigr\}_{t\in [0,1]},
	\end{equation*}
with $B$ a Brownian bridge.
\end{cor}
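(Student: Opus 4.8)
The plan is to read the result off the Gaussian approximation \eqref{first3} in Theorem \ref{ThmCj} by an explicit cancellation of the ``common'' term, then to identify the limit as a scaled Brownian bridge, and finally to replace the deterministic normalization by the random one $\sqrt{\hat C_j(1)}$ via Slutsky.

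Fix $j$ and abbreviate $p:=C_j(1)>0$. Under the null hypothesis $C_j(t)=tC_j(1)=tp$, so that $C_j(t)-tC_j(1)\equiv 0$, write
\[
\sqrt{k}\bigl(\hat C_j(t)-t\hat C_j(1)\bigr)=\sqrt{k}\bigl(\hat C_j(t)-C_j(t)\bigr)-t\,\sqrt{k}\bigl(\hat C_j(1)-C_j(1)\bigr).
\]
Applying \eqref{first3} at $t$ and at $1$, the right-hand side is, uniformly in $t\in[0,1]$, within $o_P(1)$ of
\[
\Bigl\{W_j(1,tp)-tp\sum_{r=1}^m W_r(1,C_r(1))\Bigr\}-t\Bigl\{W_j(1,p)-p\sum_{r=1}^m W_r(1,C_r(1))\Bigr\}=W_j(1,tp)-t\,W_j(1,p),
\]
i.e.\ the $\sum_r W_r(1,C_r(1))$ contributions cancel exactly. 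To identify the law of $G(t):=W_j(1,tp)-tW_j(1,p)$, use the covariance formula for $\Sigma$ in Theorem \ref{Thm.QuantProc} with $j_1=j_2=j$ and $s_1=s_2=1$, together with $R_{j,j}(x,y)=x\wedge y$ (the tail copula of a continuous component with itself): this gives $\mathrm{Cov}\bigl(W_j(1,C_j(t_1)),W_j(1,C_j(t_2))\bigr)=\frac1m\int_0^{t_1\wedge t_2}c(u,j)\,du=C_j(t_1\wedge t_2)=(t_1\wedge t_2)p$. Hence $t\mapsto W_j(1,tp)$ is a centered Gaussian process with covariance $(t_1\wedge t_2)p$, i.e.\ equal in law to $\sqrt{p}$ times a standard Brownian motion, so $G$ equals $\sqrt{p}$ times a standard Brownian bridge. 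Since the approximating process in \eqref{first3} has continuous sample paths and the bound is uniform in $t$ (and $\hat C_j(0)=0$ makes $t=0$ trivial), we obtain $\sqrt{k}\bigl(\hat C_j(\cdot)-(\cdot)\hat C_j(1)\bigr)\big/\sqrt{C_j(1)}\conv{d}\{B(t)\}_{t\in[0,1]}$ in $D[0,1]$, with $B$ a standard Brownian bridge.

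It remains to pass from $\sqrt{C_j(1)}$ to $\sqrt{\hat C_j(1)}$. Taking $t=1$ in \eqref{first3} shows $\hat C_j(1)-C_j(1)=O_P(k^{-1/2})$, hence $\hat C_j(1)\conv{P}C_j(1)>0$ and $\sqrt{C_j(1)/\hat C_j(1)}\conv{P}1$. Writing
\[
\sqrt{k\,\hat C_j(1)}\Bigl(\frac{\hat C_j(t)}{\hat C_j(1)}-t\Bigr)=\frac{\sqrt{k}\bigl(\hat C_j(t)-t\hat C_j(1)\bigr)}{\sqrt{C_j(1)}}\cdot\sqrt{\frac{C_j(1)}{\hat C_j(1)}},
\]
the first factor converges weakly to $B$ in $D[0,1]$ and the second converges to $1$ in probability, so a function-space Slutsky argument yields the claim. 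There is no serious obstacle here; the only point requiring care — and the reason one needs the uniform (sup-norm) form of \eqref{first3} rather than mere finite-dimensional convergence — is to check that the decomposition, the cancellation, and the $o_P(1)$ term are controlled uniformly in $t$, including near $t=0$ where the ratio $\hat C_j(t)/\hat C_j(1)$ might appear delicate but is harmless because its numerator vanishes there.
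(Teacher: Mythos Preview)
Your proof is correct and follows essentially the same route as the paper: the same decomposition $\sqrt{k}(\hat C_j(t)-t\hat C_j(1))=\sqrt{k}(\hat C_j(t)-C_j(t))-t\sqrt{k}(\hat C_j(1)-C_j(1))$, the same cancellation of the $\sum_r W_r(1,C_r(1))$ terms via \eqref{first3}, the identification $W_j(1,tC_j(1))-tW_j(1,C_j(1))\stackrel{d}{=}\sqrt{C_j(1)}B(t)$, and Slutsky to swap $C_j(1)$ for $\hat C_j(1)$. If anything you are more explicit than the paper in justifying the Brownian-bridge identification through the covariance computation with $R_{j,j}(x,y)=x\wedge y$.
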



Finally, we introduce the maximum likelihood estimator (MLE) of $(\gamma,a_0(\frac N k))$ based on dependent and non-identically distributed observations as described in Section \ref{Sec.Intro}.
 The estimator is based on the sample $\{X_{N-i+1:N}-X _{N-k:N}\}_{i=1}^k$ for which we have the result of Corollary \ref{Cor.QuantProc}. We highlight $\gamma_0$ as the true unknown parameter value.

In particular, we know from \eqref{QuantOverallCor} that an approximate model for $\{X_{N-i+1:N}-X _{N-k:N}\}_{i=1}^k$ is the Generalized Pareto ($GP_{\gamma,\sigma}$), with  well-known log-likelihood
\begin{equation}\label{eq:ldef}
 \ell(\gamma,\sigma,x)=-\log\sigma-\left(1+\frac{1}{\gamma}\right)\log\left(1+\gamma\,\frac{x}{\sigma}\right),\quad 0<x<\frac{\sigma}{-\gamma_-},
\end{equation}
(for $\gamma=0$ the formula is interpreted as  $-\log\sigma- x/\sigma$), and standard tail quantile function,
\[(1-GP_{\gamma,1})^\leftarrow(s)=\frac{s^{-\gamma}-1}{\gamma},\quad s\in(0,1).\]
The misspecified log-likelihood based on the above sample can be written as, with parameter space $(\gamma,\sigma)\in\real\times(0,\infty)$,
\begin{eqnarray}
L_{N,k}(\gamma,\sigma_{N/k})&=&\sum_{i=1}^{k}\ell(\gamma,\sigma_{N/k},X_{N-i+1:N}-X_{N-k:N})\nonumber\\
&=&k\int_0^1\ell(\gamma,\sigma_{N/k},X_{N-[ks]:N}-X_{N-k:N})\,ds\nonumber\\
&=& k\int_0^1\ell\left(\gamma,\frac{\sigma_{N/k}}{a_0(N/k)},\frac{X_{N-[ks]:N}-X_{N-k:N}}{a_0(N/k)}\right)ds-k\log a_0(N/k).
\label{L}
\end{eqnarray}
Generally $(\hat{\gamma},\hat\sigma_{N/k})$ is an MLE if it is a local maximizer of $L_{N,k}(\gamma,\sigma_{N/k})$ solving the score equations,
\[\left\{
\begin{array}{lll}
\frac{\partial}{\partial \gamma}L_{N,k}(\gamma,\sigma_{N/k})&=&0\\
\frac{\partial}{\partial \sigma}L_{N,k}(\gamma,\sigma_{N/k})&=&0.\\
\end{array}
\right.\]

\begin{thm}\label{mle_theor}
Under conditions (i)-(iv) with $\gamma_0>-1/2$, with probability tending to 1, there exists a unique sequence of estimators $(\widehat\gamma_n,\widehat a_0(N/k))$, maximizing \eqref{L}, for which
\begin{equation*}
\sqrt{k}\left(\widehat\gamma_n-\gamma_0, \frac{\widehat a_0(N/k)}{a_0(N/k)}-1\right)\stackrel{d}\longrightarrow N(0,I_{\gamma_0}^{-1}\Sigma_{\gamma_0}I_{\gamma_0}^{-1})
\end{equation*}
where
\begin{equation*}
 		I_{\gamma_0}^{-1}= \begin{bmatrix}
 			 					(\gamma_0+1)^2 & -(\gamma_0+1)\\
 			 					-(\gamma_0+1) & 2(\gamma_0+1)
 							\end{bmatrix}	
 \end{equation*}
and $\Sigma_{\gamma_0}$ is the covariance matrix of the random vector
\begin{equation*}
\left[
\begin{array}{lc}
\sum_{j=1}^{m}\frac 1\gamma_0\int_{0}^{1}\left(s^{\gamma_0}-(1+\gamma_0)s^{2\gamma_0}\right)\left\{s^{-\gamma_0-1}W_j(s,C_j(1))-W_j(1,C_j(1))\right\} ds\\
\sum_{j=1}^{m} (1+\gamma_0)\int_{0}^{1}s^{2\gamma_0}\left\{s^{-\gamma_0-1}W_j(s,C_j(1))-W_j(1,C_j(1))\right\} ds
\end{array}\right]
\end{equation*}
with $\{W_j\}_{j=1}^m$ from Theorem \ref{Thm.QuantProc}.
\end{thm}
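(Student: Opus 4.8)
The plan is to carry the Generalized Pareto pseudo-MLE analysis of \citet[Section~3.4]{deHaanFerreira2006} over to the present setting, feeding in Corollary~\ref{Cor.QuantProc} as the only probabilistic input; the dependence and non-identical-distribution structure then enters solely through the limiting process. First I would reparametrise. Setting $\sigma:=\sigma_{N/k}/a_0(N/k)$ and $Y_{[ks]}:=(X_{N-[ks]:N}-X_{N-k:N})/a_0(N/k)$, maximising \eqref{L} over $(\gamma,\sigma_{N/k})\in\real\times(0,\infty)$ is, up to the fixed additive constant $-k\log a_0(N/k)$, the same as maximising
\[
Q_{N,k}(\gamma,\sigma):=k\int_0^1 \ell(\gamma,\sigma,Y_{[ks]})\,ds
\]
over $(\gamma,\sigma)\in\real\times(0,\infty)$, and then $\widehat a_0(N/k)/a_0(N/k)=\widehat\sigma$. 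By Corollary~\ref{Cor.QuantProc}, uniformly for $s\in[1/(2k),T]$ with weight $1\wedge s^{\gamma_0+1/2+\varepsilon}$,
\[
\sqrt k\Bigl(Y_{[ks]}-\tfrac{s^{-\gamma_0}-1}{\gamma_0}\Bigr)=\mathbb W(s)+o_P(1),\qquad \mathbb W(s):=\sum_{j=1}^m\bigl(s^{-\gamma_0-1}W_j(s,C_j(1))-W_j(1,C_j(1))\bigr),
\]
and $Y_{[ks]}\conv{P}(s^{-\gamma_0}-1)/\gamma_0$ uniformly on that range.

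Next I would handle the score. Differentiating $\ell$ in $x$ at $\sigma=1$ gives $\ell_{\gamma x}(\gamma,1,x)=\frac{1}{\gamma(1+\gamma x)}-\frac{\gamma+1}{\gamma(1+\gamma x)^2}$ and $\ell_{\sigma x}(\gamma,1,x)=\frac{\gamma+1}{(1+\gamma x)^2}$; substituting $x=(s^{-\gamma}-1)/\gamma$, for which $1+\gamma x=s^{-\gamma}$, produces exactly $\frac{1}{\gamma}(s^{\gamma}-(1+\gamma)s^{2\gamma})$ and $(1+\gamma)s^{2\gamma}$ --- the kernels appearing in the definition of $\Sigma_{\gamma_0}$. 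Since $s\mapsto(s^{-\gamma_0}-1)/\gamma_0$ has the $GP_{\gamma_0,1}$ law under $s\sim U(0,1)$, the GP score identity gives $\int_0^1 \ell_\theta(\gamma_0,1,(s^{-\gamma_0}-1)/\gamma_0)\,ds=0$, with $\theta=(\gamma,\sigma)$, so a first-order Taylor expansion of $\ell_\theta(\gamma_0,1,\cdot)$ in its third argument about $(s^{-\gamma_0}-1)/\gamma_0$, together with the display above, yields
\[
\sqrt k\,\tfrac1k\,\partial_\theta Q_{N,k}(\gamma_0,1)=\int_0^1 \ell_{\theta x}\Bigl(\gamma_0,1,\tfrac{s^{-\gamma_0}-1}{\gamma_0}\Bigr)\,\mathbb W(s)\,ds+o_P(1)\ \conv{d}\ N(0,\Sigma_{\gamma_0}),
\]
the limit being a measurable linear functional of a Gaussian process, hence centred Gaussian with covariance $\Sigma_{\gamma_0}$ by the very definition of that matrix. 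In parallel, $\tfrac1k\,\partial^2_{\theta\theta'}Q_{N,k}(\gamma,\sigma)=\int_0^1 \ell_{\theta\theta'}(\gamma,\sigma,Y_{[ks]})\,ds\conv{P}-I_{\gamma_0}$ uniformly on a shrinking neighbourhood of $(\gamma_0,1)$, where $I_{\gamma_0}$ is the GP Fisher information and its inverse is the matrix in the statement, finite precisely because $\gamma_0>-1/2$; this uses only the uniform consistency of $Y_{[ks]}$ and continuity of $\ell_{\theta\theta'}$.

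Then I would establish existence and uniqueness of a consistent root and conclude. Since $\tfrac1k\partial^2_{\theta\theta'}Q_{N,k}$ is uniformly close to the negative-definite $-I_{\gamma_0}$ near $(\gamma_0,1)$ while $\tfrac1k\partial_\theta Q_{N,k}(\gamma_0,1)=O_P(k^{-1/2})$, a Brouwer fixed-point / quantitative inverse-function argument exactly as in \citet[Section~3.4]{deHaanFerreira2006} shows that, with probability tending to $1$, the score equations have a unique solution $\widehat\theta_n=(\widehat\gamma_n,\widehat\sigma_n)$ in an $o_P(1)$-ball about $(\gamma_0,1)$, which is a strict local maximiser of $Q_{N,k}$, and that it is the maximiser of \eqref{L} follows as in that reference. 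A mean-value expansion of the score equation,
\[
0=\tfrac1k\partial_\theta Q_{N,k}(\gamma_0,1)+\Bigl(\tfrac1k\partial^2_{\theta\theta'}Q_{N,k}(\bar\theta_n)\Bigr)\bigl(\widehat\theta_n-(\gamma_0,1)\bigr),
\]
with $\bar\theta_n$ on the segment joining $\widehat\theta_n$ and $(\gamma_0,1)$, then gives $\sqrt k(\widehat\theta_n-(\gamma_0,1))=I_{\gamma_0}^{-1}\,\sqrt k\,\tfrac1k\partial_\theta Q_{N,k}(\gamma_0,1)+o_P(1)\conv{d}N(0,I_{\gamma_0}^{-1}\Sigma_{\gamma_0}I_{\gamma_0}^{-1})$, and the theorem follows on recalling that $\widehat a_0(N/k)/a_0(N/k)-1=\widehat\sigma_n-1$.

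I expect the main obstacle to be justifying all of these expansions \emph{uniformly in $s\in(0,1]$} from only the weighted, in-probability bound of Corollary~\ref{Cor.QuantProc}. The kernels $\ell_{\theta x}$ and the Taylor remainder kernels $\ell_{\theta xx}$ behave like $s^{2\gamma_0}$ as $s\downarrow0$ when $\gamma_0>0$ (and, when $\gamma_0<0$, correspond to $Y_{[ks]}$ approaching the moving right endpoint $s^{-\gamma_0}$), so their integrability against the inverse weight $s^{-1/2-\varepsilon}$ of Corollary~\ref{Cor.QuantProc} holds only because $\gamma_0>-1/2$ and $\varepsilon$ may be taken small; the remainder term is then $O_P(k^{-1/2})$ and negligible, and the Fisher information is finite and invertible. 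In addition, the range $s\in(0,1/(2k))$ --- the topmost handful of order statistics, outside the scope of Corollary~\ref{Cor.QuantProc} --- must be shown to contribute negligibly to $Q_{N,k}$ and its derivatives, which can be done via a crude bound on $(X_{N:N}-X_{N-k:N})/a_0(N/k)$. Once this uniform control and the consistent-root argument are in place, the rest is the routine bookkeeping of a misspecified-likelihood sandwich argument.
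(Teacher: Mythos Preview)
Your proposal is correct and follows essentially the same route as the paper: feed Corollary~\ref{Cor.QuantProc} into a local expansion of the GP log-likelihood, identify the score limit via the kernels $\ell_{\theta x}(\gamma_0,1,(s^{-\gamma_0}-1)/\gamma_0)$ (which are exactly the functions in the statement), show the Hessian converges to $-I_{\gamma_0}$, and conclude by a standard M-estimator argument. The only cosmetic difference is that the paper packages the final step as a quadratic expansion of the local log-likelihood plus the Argmax Theorem (with tightness via Dombry and Ferreira, 2018), whereas you expand the score equation by the mean-value theorem and invoke the Brouwer/inverse-function argument of \citet[Section~3.4]{deHaanFerreira2006}; the analytic content and the technical obstacle you flag (uniform control near $s=0$ from the weighted bound, using $\gamma_0>-1/2$) are identical.
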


\begin{rem}
The covariance matrix $\Sigma_{\gamma_0}$ can be calculated as follows: let $U$ be a $(1\times 2m)$ vector with the first $i=1,\ldots,m$, components as $\gamma_0^{-1}\int_{0}^{1}\left(s^{\gamma_0}-(1+\gamma_0)s^{2\gamma_0}\right)\left\{s^{-\gamma_0-1}W_i(s,C_i(1))-W_i(1,C_i(1))\right\}ds$, and the remaining $i=m+1,\ldots,2m$ components as $(1+\gamma_0)\int_{0}^{1}s^{2\gamma_0}\left\{s^{-\gamma_0-1}W_i(s,C_i(1))-W_i(1,C_i(1))\right\}ds$. The covariance matrix of $U$ has entries $\{\tau_{ij}\}_{i=1}^{2m}$ given by,
\begin{eqnarray*}
&&\tau_{ii}=\frac{2+6\gamma_0+5\gamma_0^2}{(1+\gamma_0)^2(1+2\gamma_0)^2}\,C_i(1),\quad i=1,\ldots,m; \\
&&\tau_{ii}=\left(\frac{1+\gamma_0}{1+2\gamma_0}\right)^2C_i(1),\quad i=m+1,\ldots,2m;\\
&&\tau_{i,i+m}= \tau_{i+m,i}= \frac{1+\gamma_0}{(1+2\gamma_0)^2}C_i(1),\quad i=1,\ldots,m;\\
&&\tau_{ij}=\frac{1}{\gamma_0^2}\left\{\int_{0}^{1}\int_{0}^{1}f(s)f(t)r_{ij}(s,t)-2f(s)g(t)r_{ij}(s,1)+g(s)g(t)r_{ij}(1,1)\,ds\,dt\right\},\quad i\neq j, i,j=1,\ldots,m;\\
&&\tau_{ij}=(1+\gamma_0)^2\left\{\int_{0}^{1}\int_{0}^{1}s^{\gamma_0-1}t^{\gamma_0-1}r_{i-m,j-m}(s,t)-2s^{\gamma_0-1}t^{2\gamma_0}r_{i-m,j-m}(s,1)+s^{2\gamma_0}t^{2\gamma_0}r_{i-m,j-m}(1,1)\,ds\,dt\right\},\\
&&\hspace{9cm} i\neq j, i=m+1,\ldots,2m,j=m+1,\ldots,2m;\\
&&\hspace{-1cm}\tau_{ij}=\frac{1+\gamma_0}{\gamma_0}\left\{\int_{0}^{1}\int_{0}^{1}f(s)t^{\gamma_0-1}r_{i,j-m}(s,t)-f(s)t^{2\gamma_0}r_{i,j-m}(s,1)-g(s)t^{\gamma_0-1}r_{i,j-m}(1,t)+g(s)t^{2\gamma_0}r_{i,j-m}(1,1)\,ds\,dt\right\},\\
&&\hspace{9cm} j\neq i+m, i=1,\ldots,m,\,j=m+1,\ldots,2m; \\
&&\hspace{-1cm}\tau_{ij}=\frac{1+\gamma_0}{\gamma_0}\left\{\int_{0}^{1}\int_{0}^{1}f(s)t^{\gamma_0-1}r_{i-m,j}(s,t)-f(s)t^{2\gamma_0}r_{i-m,j}(s,1)-g(s)t^{\gamma_0-1}r_{i-m,j}(1,t)+g(s)t^{2\gamma_0}r_{i-m,j}(1,1)\,ds\,dt\right\},\\
&&\hspace{9cm}  i\neq j+m, j=1,\ldots,m,\,i=m+1,\ldots,2m,
\end{eqnarray*}
where
\[f(s)=s^{-1}-(1+\gamma_0)s^{\gamma_0-1},\,\, g(s)=s^{\gamma_0}-(1+\gamma_0)s^{2\gamma_0},\,\, \text{ and } r_{ij}(s,t)=\sigma_{i,j}(s,t,1,1)=EW_i(s,C_i(1))W_j(t,C_j(1)).
\]
Then  $\Sigma_{\gamma_0}=Cov(I\times U)=I\,Cov(U)\,I^T$ where $I=I_{2\times 2m}=\left[
\begin{array}{cccccc}
1&\cdots&1&0&\cdots&0\\
0&\cdots&0&1&\cdots&1
\end{array}\right]$.
\end{rem}
\section{Application}
\label{Sec.App}

This section is devoted to illustrating the testing methods for detecting a trend in extreme rainfalls, both across stations and over time. We use a subset of rainfall data from the German national meteorological service, which consists of daily rainfall amounts recorded in 49 stations ($m=49$) in three regions of North-West Germany: Bremen, Niedersachsen and Hamburg. The data set comprises nearly complete time series records over 84 years (1931-2014) . We divide the data into two seasons: winter from November to March, and summer from May to September.
Although the raw data set comprises a tally of $49\times 84\times 150$ rainfall amounts within each season, the actual number of observations we use at each station, $n$, will be determined by a declustering procedure. This has been designed to remove the effect of temporal dependence and is viewed as a key step to ensure that after pre-processing, the data set can be regarded as having no temporal dependence. The idea of our pre-processing procedure is to create gaps between consecutive observations by removing some days in the data set. The detailed procedure we have employed is outlined in the next paragraph.

The raw data set consists of daily rainfall amounts (in \emph{mm}) at each gauging station $j=1, \ldots, m$, including zero rainfall. From this data set we will use the daily maximum rainfall amount across the $m$ stations, henceforth referred to as station-wise maxima, for eliciting potential serial dependence. We order all station-wise maxima from high to low. The declustering procedure is initiated by picking up the pair of calendar days with the largest and second largest station-wise maxima. If this second maximum was recorded within two consecutive days of the first station-wise maximum, then all $m$ observations on its corresponding day are removed; otherwise both days are kept. This procedure then rolls out to the subsequent ordered station-wise maxima: for each station-wise maxima, we remove the corresponding day if it is recorded within two consecutive days of any of the previously kept days. This procedure results in the declustered data set used for testing the presence of scedasis over time and/or across space.

\begin{figure} 
\begin{center}
\includegraphics[scale=0.4]{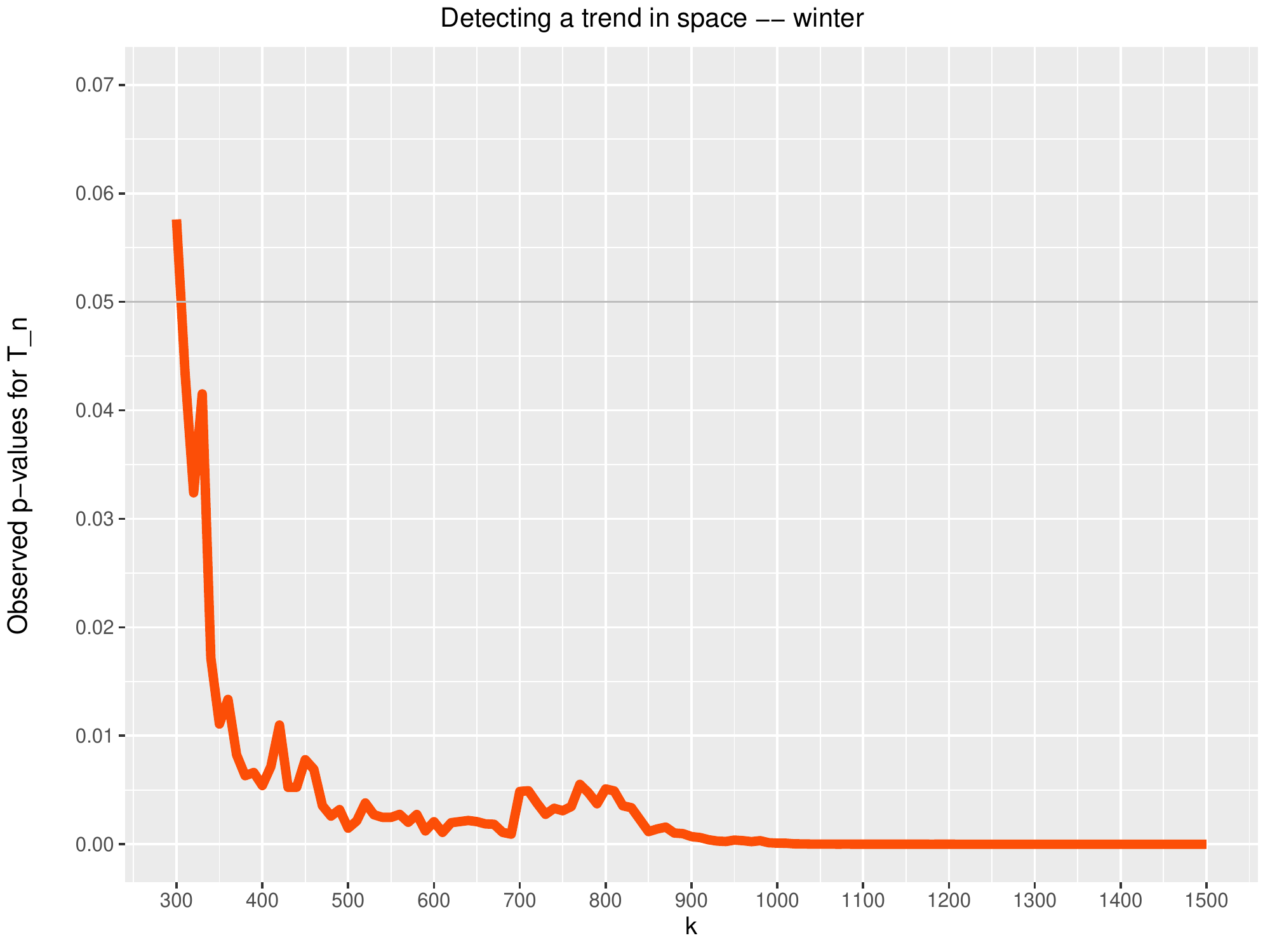}
\includegraphics[scale=0.4]{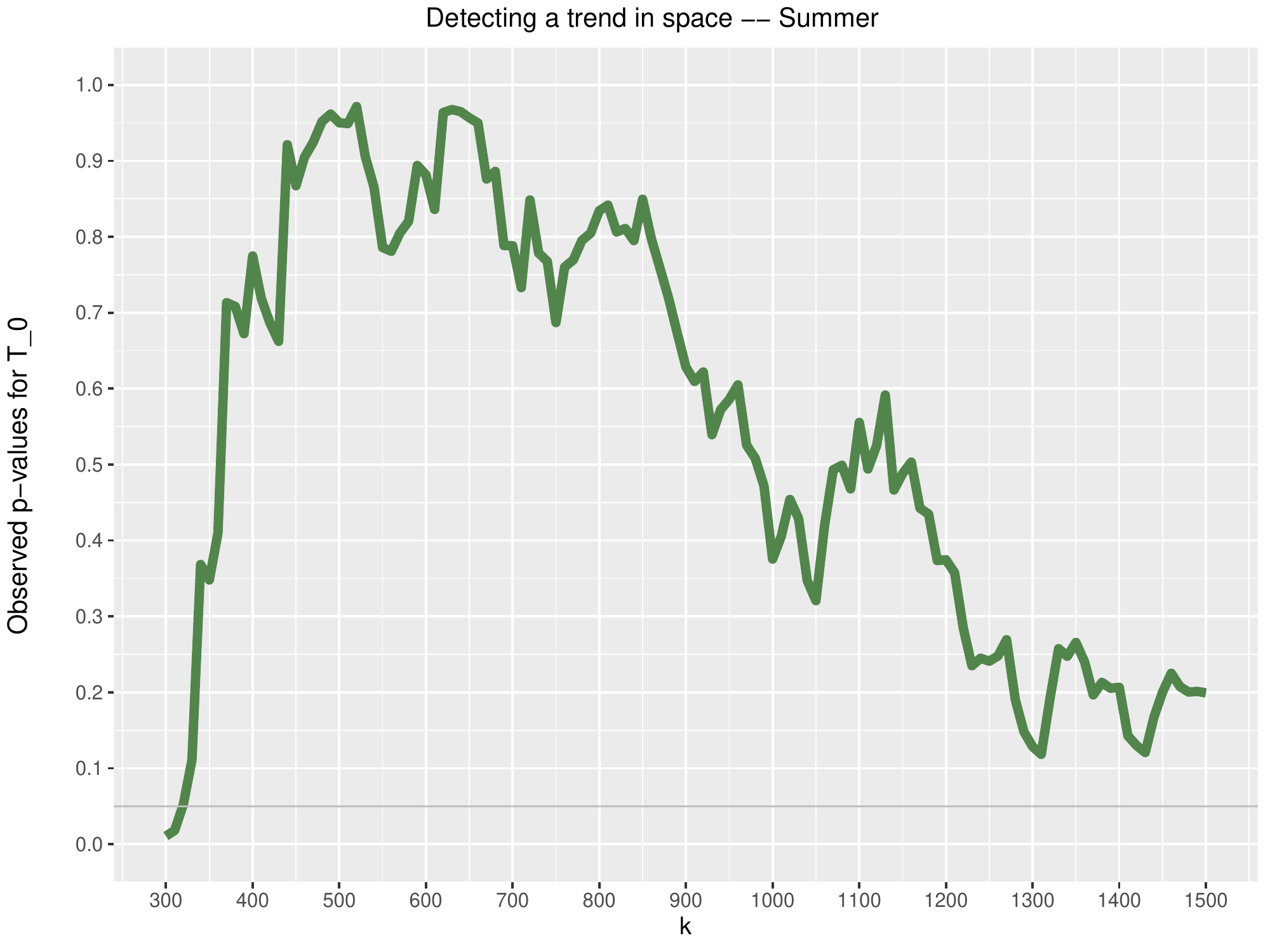}
\end{center}
\caption{\small{Obtained $p$-values through the test $T_n$ for homogeneity across space, all plotted as a function of $k= 300, \ldots, 1500.$}}
\label{Fig.T0BLoc}
\end{figure}

First, we test whether the scedasis of extreme rainfall is constant across $m=49$ stations by adopting the test statistic $T_n$ in Corollary \ref{CorBLoc}. We reject the null hypothesis of having constant scedasis across all stations for large values of $T_n$. We plot the $p$-values against $k$ the number  of upper observations used in the test in the two plots of Figure \ref{Fig.T0BLoc}, for winter and summer seasons respectively. The $p$-values obtained for the winter season stay below $5\%$ for all $k>350$. Therefore, we conclude that for the winter season, the scedasis of extreme rainfall is not constant across stations. In other words, the frequencies of having extreme rainfall differ across stations. In contrast, there is no statistical evidence of a trend in the space-domain over the summer. This finding holds for almost all values of $k$ depicted in the right panel of Figure \ref{Fig.T0BLoc}.

\begin{figure} 
\begin{center}
\includegraphics[scale=0.42]{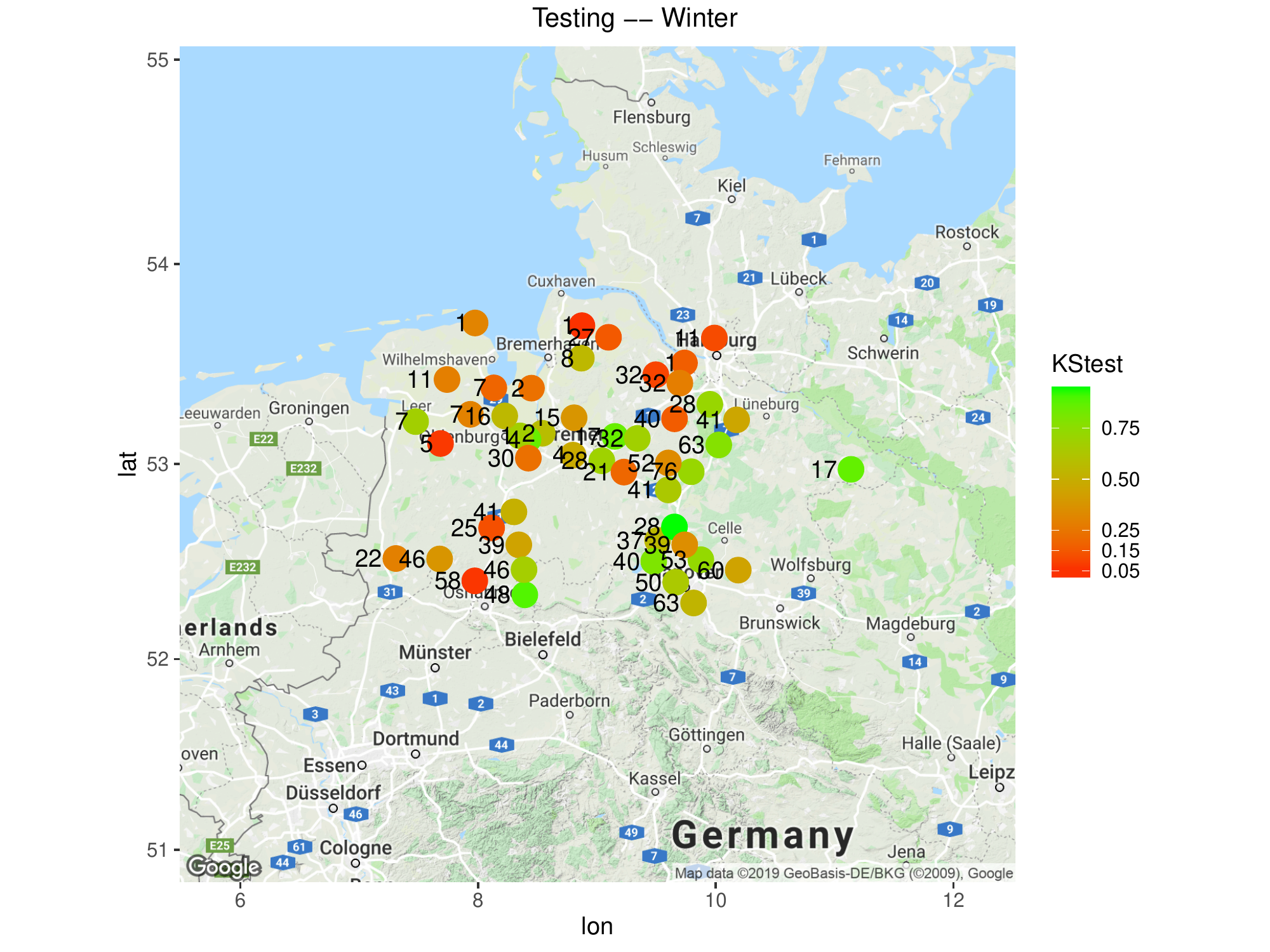}
\includegraphics[scale=0.42]{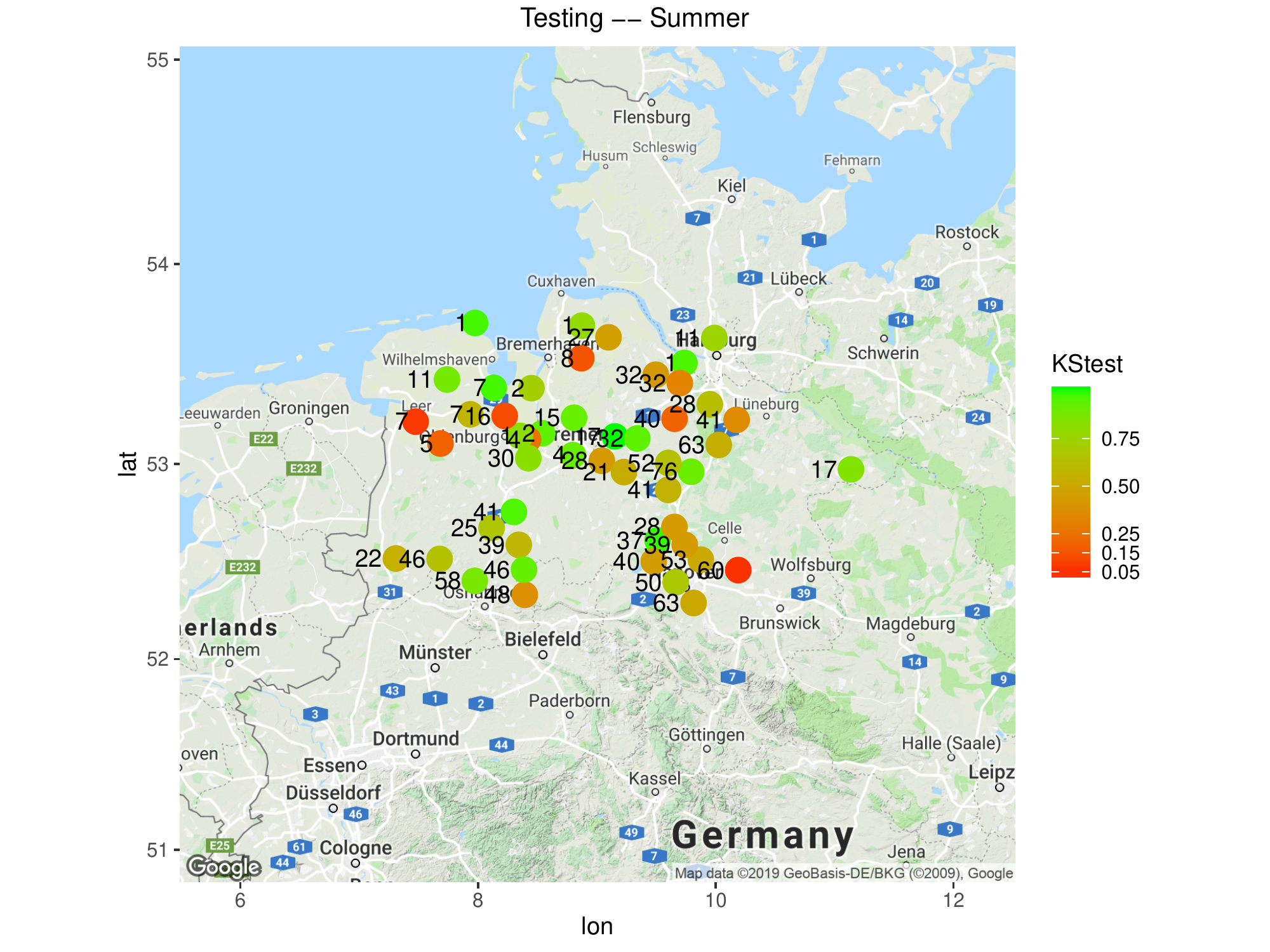}
\end{center}
\caption{\small{Obtained $p$-values for the test of the null hypothesis $H_{0,j}$ of a non-existent time trend at each station $j=1,2,\ldots,m$. The Kolmogorov-Smirnov statistic is applied with $k=1000$ higher observations. The numbers next to the station-marks indicate elevation in meters.}}
\label{Fig.TestsWLoc}
\end{figure}

Next, we investigate a possible temporal trend in the extreme rainfall process for each station mapped in Figure \ref{Fig.TestsWLoc} by means of a Kolmogorov-Smirnov (KS) type test based on the left hand side of the limit relation in Corollary \ref{CorWLoc}. For each season, we apply this test at each station $j$ with $k=1000$, and plot the $p$-values of the test in the two plots in Figure \ref{Fig.TestsWLoc}, for winter and summer seasons respectively. The sharper the red in the renderings, the lower the estimated $p$-values, and the more evidence for rejecting the null hypothesis. The brighter the green marks, the higher the $p$-values. We find that the $p$-values vary widely across the selected region, and more so in the winter.

Overall, we find that $p$-values plunge in the winter but soar in the summer at many locations. In the winter season, the KS type test highlights two stations with p-values below the nominal level $\alpha=5\%$: $p=0.044$ for station \emph{Steinau, Kr. Cuxhaven}, with elevation $1m$, and $p=0.05$ for \emph{Bramsche} at $58m$ high. Nevertheless, we need to interpret such $p$-values with caution. Given that these are the lower $p-$values across all 49 stations, we are encountering a potential multiple test problem. One potential solution is to consider the Bonferroni correction: the corrected nominal level is $\alpha^*= 5\%/49= 0.1\%$. Since these low $p$-values do not breach the corrected nominal level, there is not enough evidence in the data to reject the null hypotheses of no local trend over the winter, at the usual significance levels. Similarly, for the summer, the KS type test identifies one significant individual $p$-value of $0.048$ for station \emph{Uetze}, standing at $60m$ of elevation. Again this individual $p$-value is not in the vicinity of the Bonferroni's corrected critical barrier $\alpha^*= 0.1\%$. To summarize, there seems to be no temporal trend in extreme rainfalls in the winter or in the summer.
\begin{figure} 
\begin{center}
\includegraphics[scale=0.42]{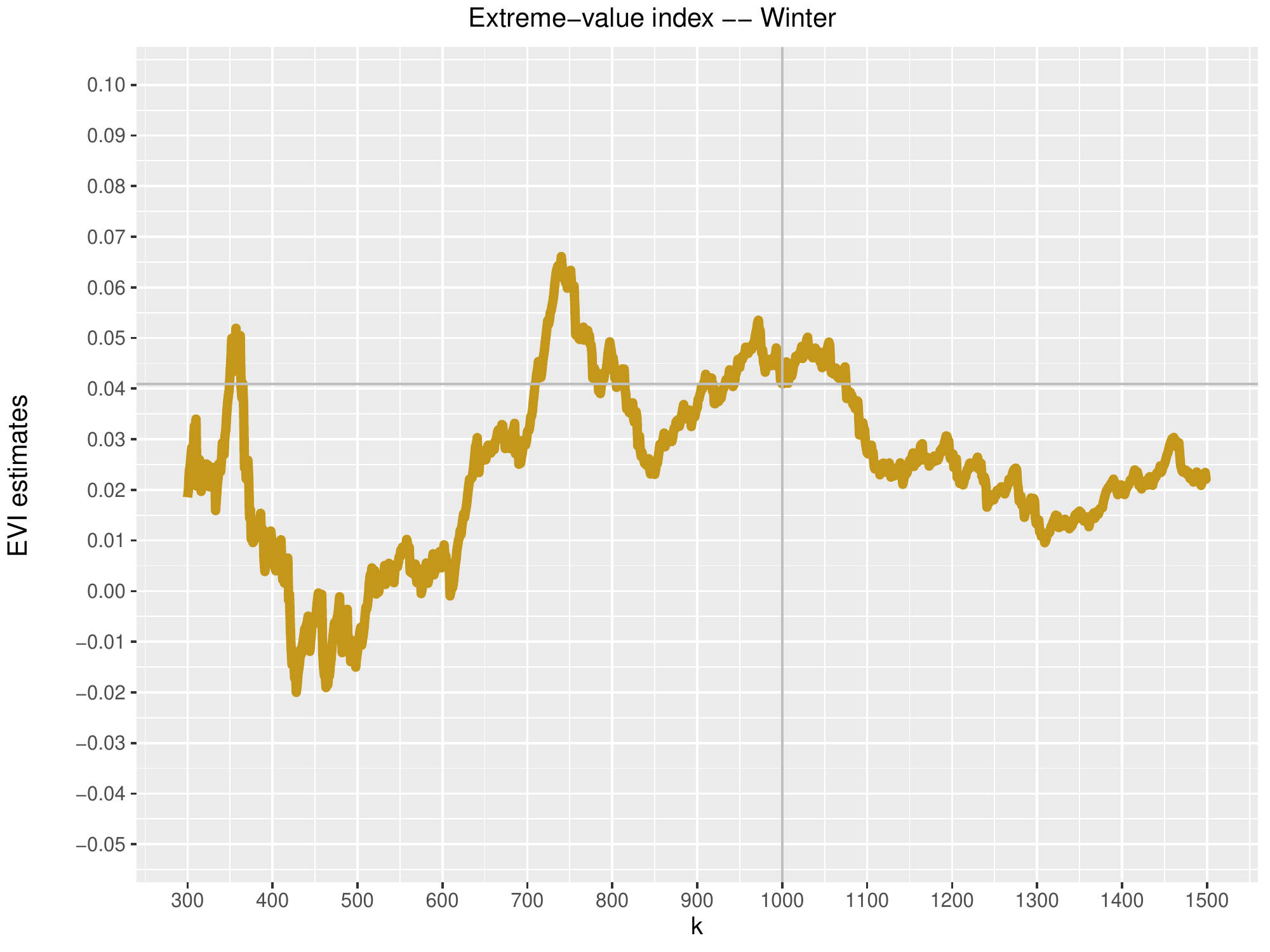}
\includegraphics[scale=0.42]{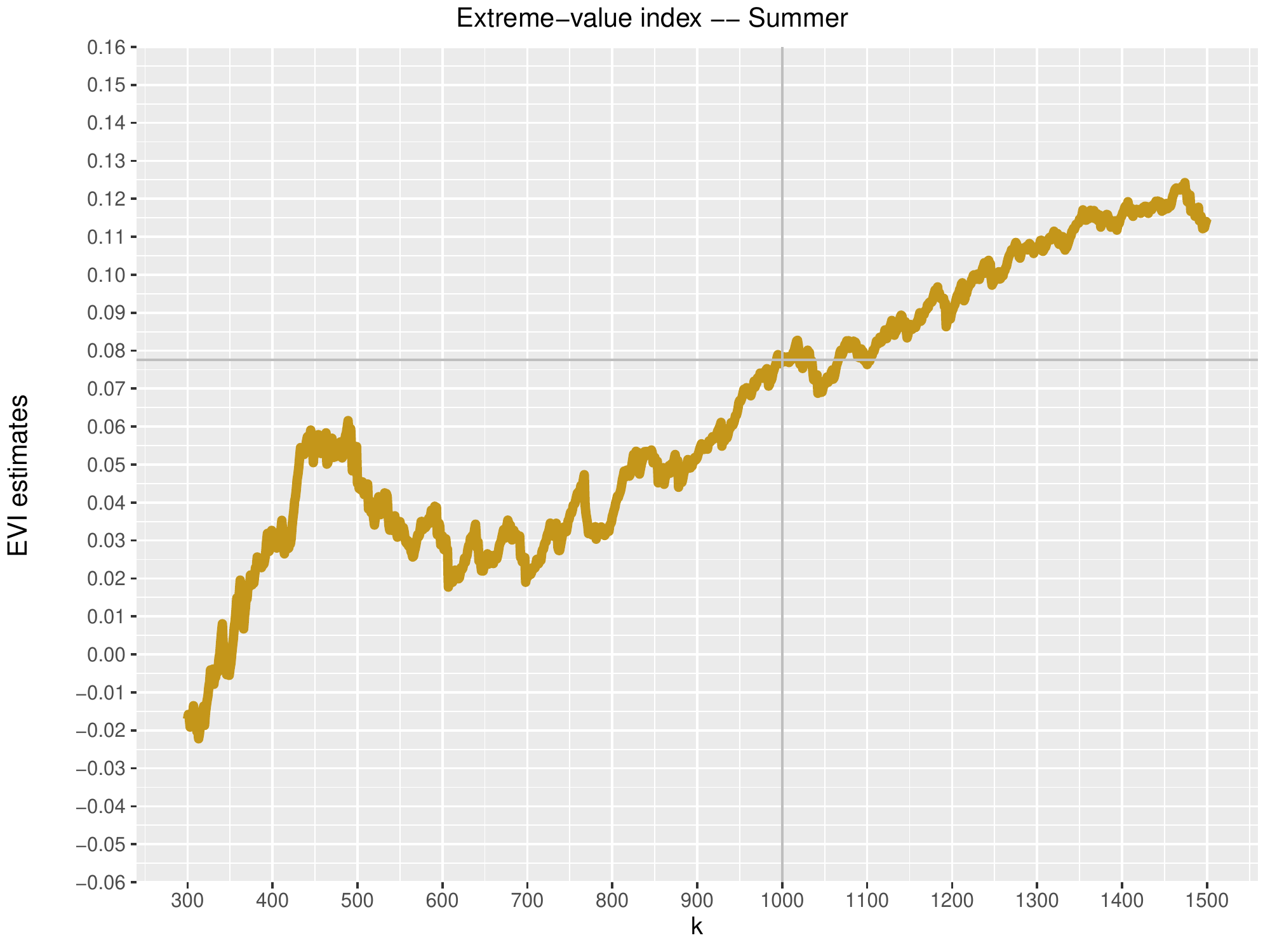}
\end{center}
\caption{\small{Sample paths of the maximum likelihood estimates of the extreme-value index, all plotted against the number $k$ of higher order statistics.}}
\label{Fig.EVIs}
\end{figure}

Finally, we report the estimated extreme value index $\gamma$ using \emph{all} data from $\emph{all}$ stations in one season, using the maximum likelihood estimator in Theorem \ref{mle_theor}. Figure \ref{Fig.EVIs} shows the estimates against various values of $k$, for the winter and summer seasons respectively. We observe that for $k$ ranging between 950 and 1100, both estimates paths  seem to consolidate a plateau of stability. For the purpose of point estimation, we fix $k=1000$, highlighted in both plots with a vertical grey line. The estimated extreme value indices are $\hat{\gamma}= 0.040$ and $0.078$ for winter and summer seasons, respectively. We conclude that the magnitude of extreme rainfalls in the summer is higher than that in the winter.

\section{Proofs}
\label{Sec.Proofs}

Write for convenience $X_{i,j}=U_{i,j}(Y_{i,j})$, where $(Y_{i,1}, Y_{i,2}, \ldots, Y_{i,m})$ follows the distribution function $\widetilde{F}$. Let $Y_{i:n}^{(j)}$ be the $i$-th order statistic from $Y_{1,j}, Y_{2,j}, \ldots, Y_{n,j}$, for all $j$.

\begin{pfofThm} \textbf{\ref{Thm.QuantProc}} {\bfseries a) Tail empirical distribution functions}\\
Consider one station $j$ for the time being and define
\begin{equation*}
	C_{j,n}(t):= \frac{1}{N} \sumab{i=1}{nt} \ci{j}.	
\end{equation*}
 According to Proposition 1 of \citet{EinmahlEtAl2016} we have under a Skorokhod construction for any $t_0>0$ and $0\leq \eta < 1/2$, almost surely,
 \begin{equation*}
 \suprem{0< v\leq t_0, \, 0\leq t\leq 1}\, v^{-\eta}\biggl|  \sqrt{k} \Bigl\{ \frac{1}{k} \sumab{i=1}{nt} \one_{\bigl\{ Y_{i,j} > \frac{n m C_{j,n}(1)}{ k v\, \tci{j}}\bigr\}} - \frac{v \,C_{j}(t)}{C_{j,n}(1)}   \Bigr\}- W_j\Bigl(v,\frac{C_{j}(t)}{C_{j,n}(1)} \Bigr) \biggr| \longrightarrow 0,
 \end{equation*}
as $n \rightarrow \infty$. After some rearrangement we get, almost surely,
\begin{equation}\label{STEP1}
 \suprem{0< v\leq t_0, \, 0\leq t\leq 1}\, v^{-\eta}\biggl|  \sqrt{k} \Bigl\{ \frac{1}{k} \sumab{i=1}{nt} \one_{\bigl\{ Y_{i,j} > \frac{n }{ k v\, \tci{j}}\bigr\}} - mv C_{j}(t)   \Bigr\}- W_j\bigl(mv, C_{j}(t) \bigr) \biggr|\longrightarrow 0.
 \end{equation}
 We are going to transform this result in several steps. First replace $v$ with $\frac{n}{ k \tci{j}} \Bigl( 1-F_{i,j}\bigl(U_0(\frac{N}{ku}) \bigr) \Bigr)$ for $0<u \leq t_0$. Note that by condition (ii), as $n \rightarrow \infty$,
\begin{equation*}
	\frac{n}{ k \tci{j}}\biggl( 1-F_{i,j}\Bigl(U_0\bigl(\frac{N}{ku}\bigr) \Bigr) \biggr)= \frac{n}{ k } \biggl( 1-F_{0}\Bigl(U_0\bigl(\frac{N}{ku}\bigr) \Bigr) \biggr) \biggl\{ 1+ O\biggl(A_1 \Bigl( \frac{N}{ku}\Bigr) \biggr)\biggr\} = \frac{u}{m} \biggl\{ 1+ O\biggl(A_1 \Bigl( \frac{N}{ku}\Bigr) \biggr)\biggr\}
\end{equation*}
and by condition (iv)
\begin{equation*}
	\sqrt{k} A_{1}\Bigl(\frac{N}{ku} \Bigr)  \rightarrow 0 \quad \mbox{uniformly for } 0<u \leq t_0.	
\end{equation*}
Hence we have, almost surely,
\begin{equation*}
	\suprem{0< u\leq t_0, \, 0\leq t\leq 1}\, u^{-\eta}\biggl|  \sqrt{k} \Bigl\{ \frac{1}{k} \sumab{i=1}{nt} \one_{\bigl\{ U_{i,j}(Y_{i,j}) >\,  U_0\bigl( \frac{N}{ku}\bigr)\bigr\}} - u\,C_{j}(t)   \Bigr\}- W_j\bigl(u,\,C_{j}(t) \bigr) \biggr| \longrightarrow 0.
\end{equation*}
Next replace $u$ with $\frac{N}{k}\Bigl(1-F_0\Bigl( b_0 \bigl(\frac{N}{k} \bigr)+ x\, a_0 \bigl(\frac{N}{k} \bigr) \Bigr) \Bigr)$ and note that by condition (iii) and Proposition 3.2 (equation 3.2) of \citet{DreesdeHaanLi06}
\begin{equation*}
	\frac{N}{k}\biggl(1-F_0\Bigl( b_0 \bigl(\frac{N}{k} \bigr)+ x\, a_0 \bigl(\frac{N}{k} \bigr) \Bigr) \biggr)= (1+\gamma x)^{-1/\gamma} \biggl\{ 1+ O\biggl(A_0 \Bigl( \frac{N}{k}\Bigr) \biggr)\biggr\}
\end{equation*}
uniformly for $x \geq x_0$ and $\sqrt{k} A_0 \bigl(\frac{N}{k} \bigr) \rightarrow 0$ by condition (iv).

\noindent Recall $X_{i,j}= U_{i,j}(Y_{i,j})$. Hence we have for each $j$ and $x_0$ larger than $1/(-\gamma_-)$,
\begin{multline}\label{STEP2}
	\suprem{x_0 \leq x < x_1, \, 0\leq t\leq 1} (1+\gamma x)^{\eta/\gamma} \biggl|   \sqrt{k} \Bigl\{ \frac{1}{k} \sumab{i=1}{nt} \one_{\bigl\{ \frac{ X_{i,j}-b_0 (\frac{N}{k} ) }{a_0 (\frac{N}{k} )}>\, x\bigr\}} - (1+\gamma x)^{-1/\gamma} \,C_{j}(t)   \Bigr\}\\- W_j\Bigl((1+\gamma x)^{-1/\gamma}, \,C_{j}(t) \Bigr) \biggr| \longrightarrow 0,
\end{multline}
which yields the weak convergence of the weighted process on the left to the weighted Wiener process on the right.

It remains to prove the joint convergence of the processes at different locations. For simplicity we do that in the context of \eqref{STEP1} (standard marginals), not \eqref{STEP2}. The general case follows similar to above. First we deal with convergence of the finite dimensional distributions. After that we consider tightness. For ease of writing we confine ourselves to the first two dimensions, i.e. $\bigl\{ (Y_{i,1}, \,Y_{i,2})\bigr\}_{i=1}^{n}$ and one point $(s_1,t_1)$ and $(s_2,t_2)$ at each dimension.
 According to Cram\`er-Wold device, we look first at all linear combinations $(x,y \in \real, \, s_1,s_2>0, \, 0< t_1, t_2 \leq 1)$
\begin{multline}\label{PPm2}
\sqrt{k}\, \biggl[x \,  \frac{1}{k} \sumab{i=1}{nt_1}\biggl( \one_{\bigl\{Y_{i,1}> \frac{n}{k s_1\, \tci{1}}\bigr\}}- P\Bigl\{Y_{i,1}>\frac{n}{k s_1\, \tci{1}}\Bigr\} \biggr)\\ + y\,\frac{1}{k} \sumab{i=1}{nt_2} \biggl( \one_{\bigl\{Y_{i,2}>  \frac{n}{k s_2\, \tci{2}}\bigr\}}- P\Bigl\{Y_{i,2}>\frac{n}{k s_1\, \tci{2}}\Bigr\} \biggr)\biggr)\biggr].
\end{multline}
For the application of Lyapunov's theorem  \citep[cf.][Theorem 27.3]{Billingsley79} it is sufficient to check the limit behavior of moments of orders 2 and 4.

\noindent The variance of
$
	\sqrt{k}\,\frac{1}{k}\sumab{i=1}{nt_1} \one_{\bigl\{Y_{i,1}> \frac{n}{k s_1\, \tci{1}}\bigr\}}
$ is
\begin{equation} \label{VarMarginal}
   \frac{1}{n} \sumab{i=1}{nt_1} \ndivk P\Bigl\{Y_{i,1}>\frac{n}{k s_1\, \tci{1}}\Bigr\}\Bigl(1- P\Bigl\{Y_{i,1}>\frac{n}{k s_1\, \tci{1}}\Bigr\} \Bigr) \\
	\arrowf{n}	  s_1mC_{1}(t_1).
\end{equation}
Next we consider the covariance of the two terms which is
\begin{equation*}
	\frac{1}{k}\sumab{i_1=1}{nt_1}\sumab{i_2=1}{nt_2} E\biggl[\Bigl(\one_{\bigl\{Y_{i_1,1}> \frac{n}{k s_1\, \tcione{1}}\bigr\}}  -P\Bigl\{Y_{i_1,1}>\frac{n}{k s_1\, \tcione{1}}\Bigr\}\Bigr) \Bigl(\one_{\bigl\{Y_{i_2,2}> \frac{n}{k s_2\, \tcitwo{2}}\bigr\}}  -P\Bigl\{Y_{i_2,2}>\frac{n}{k s_2\, \tcitwo{2}}\Bigr\}\Bigr) \biggr].
\end{equation*}
Since all products are of lower order except one (cf. \eqref{VarMarginal}), we only need to concentrate on
\begin{eqnarray*}
	& &\frac{1}{k}\sumab{i_1=1}{nt_1}\sumab{i_2=1}{nt_2} E\Bigl[\one_{\bigl\{Y_{i_1,1}> \frac{n}{k s_1\, \tcione{1}}\bigr\}} \one_{\bigl\{Y_{i_2,2}> \frac{n}{k s_2\, \tcitwo{2}}\bigr\}}\Bigr]\\
	 &= &  \frac{1}{n}\sumab{i=1}{n(t_1 \wedge t_2)} \ndivk P\Bigl\{Y_{i,1}>\frac{n}{k s_1\, \tci{1}}, \, Y_{i,2}>\frac{n}{k s_2\, \tci{2}}\Bigr\}
\end{eqnarray*}
which, on the basis of assumption (i), Theorem 6.1.5 and page 222 of \citet{deHaanFerreira2006}, converges to $\intab{0}{t_1 \wedge t_2} R_{1,2}\bigl(s_1\, c(u, j_1), s_2\, c(u, j_2)\bigr)\, du$.

The fourth moments of the terms of \eqref{PPm2} are similar. Again all terms of this expression are of lower order except
\begin{multline*}
	Q_{n,i}:= x^4 P\Bigl\{Y_{i,1}>\frac{n}{k s_1\, \tci{1}}\Bigr\} + y^4 P\Bigl\{Y_{i,2}>\frac{n}{k s_2\, \tci{2}}\Bigr\} \\
	 +(4x^3y + 4xy^3 +6x^2y^2)P\Bigl\{Y_{i,1}>\frac{n}{k s_1\, \tci{1}}, \, Y_{i,2}>\frac{n}{k s_2\, \tci{2}}\Bigr\}.
\end{multline*}
Hence the sum of the fourth moments of \eqref{PPm2} boils down, asymptotically, to
\begin{equation*}
	\sumab{i=1}{n(t_1 \wedge t_2)} \Bigl(\sqrt{k} \frac{1}{k} \Bigr)^4 Q_{n,i}= \frac{1}{k}\frac{1}{n} \sumab{i=1}{n(t_1 \wedge t_2)} \ndivk Q_{n,i}
\end{equation*}
which is of order $k^{-1}$. Recall that the sum of the second order moments tends to a constant. Hence Lyapunov's theorem applies and we have proved convergence of
\begin{equation*}
	\sqrt{k}\biggl( \frac{1}{k} \sumab{i=1}{nt_1} \one_{\bigl\{Y_{i,1}> \frac{n}{k s_1\, \tci{1}}\bigr\}}- P\Bigl\{Y_{i,1}>\frac{n}{k s_1\, \tci{1}}\Bigr\}, \,  \frac{1}{k} \sumab{i=1}{nt_2} \one_{\bigl\{Y_{i,2}>  \frac{n}{k s_2\, \tci{2}}\bigr\}}  - P\Bigl\{Y_{i,2}>\frac{n}{k s_2\, \tci{2}}\Bigr\}\biggr)
\end{equation*}
to $\Bigl(W_1\bigl( s_1, mC_1(t_1)\bigr), W_2\bigl(s_2 , mC_2(t_2) \bigr)\Bigr)$ with the covariance given as $\sigma_{1,2}(s_1,s_2,t_1,t_2)$.

\noindent Next note that by assumption (iv)
\begin{eqnarray*}
& &  x\,\frac{1}{n}\sumab{i=1}{nt_1} \ndivk P\Bigl\{Y_{i,1} >\frac{n}{k s_1\, \tci{1}}\Bigr\} + y\,\frac{1}{n}\sumab{i=1}{nt_2} \ndivk P\Bigl\{Y_{i,2}>\frac{n}{k s_2\, \tci{2}}\Bigr\}\\
&=&   x \, \frac{1}{n}\sumab{i=1}{nt_1} s_1\,\ci{1}  + y\, \frac{1}{n}\sumab{i=1}{nt_2} s_2\,\ci{2}\\
& = & x\, s_1\, m C_{1}(t_1) + y\,s_2\,m C_{2}(t_2) + o\Bigl(\frac{1}{\sqrt{k}} \Bigr).
\end{eqnarray*}
It follows that
\begin{equation}\label{Proc.mEquals2}
	\sqrt{k}\biggl( \frac{1}{k} \sumab{i=1}{nt_1} \one_{\bigl\{Y_{i,1}> \frac{n}{k s_1\, \tci{1}}\bigr\}}-  s_1 mC_1(t_1), \,  \frac{1}{k} \sumab{i=1}{nt_2} \one_{\bigl\{Y_{i,2}>  \frac{n}{k s_2\, \tci{2}}\bigr\}}  - s_2 mC_{2}(t_2)\biggr)
\end{equation}
converges to $\Bigl( W_1\bigl( s_1 , m C_{1}(t_1)\bigr),\, W_2\bigl(s_2, m C_{2}(t_2)\Bigr)$.

\noindent Next we prove tightness of the  process \eqref{Proc.mEquals2} in the space $D\bigl([0,1]^2 \times [0,T]^2\bigr)$ with index $(t_1,t_2,s_1,s_2)\in[0,1]^2\times [0,T]^2$ for $T>0$. We know from \citet{EinmahlEtAl2016} that for $0\leq \eta <1/2$ the sequence of processes
\begin{equation*}
s_j^{-\eta} \sqrt{k}\biggl( \frac{1}{k} \sumab{i=1}{nt_j} \one_{\bigl\{Y_{i,j}> \frac{n}{k s_j\, \tci{j}}\bigr\}}- s_jm C_j(t_j)\biggr)
\end{equation*}
is tight in the space $D\bigl([0,1] \times [0,T]\bigr)$, $T>0$, for $j=1$ and $j=2$. It then follows from \citet{FergerVogel2015} that the joint process \eqref{Proc.mEquals2} is also tight.

Hence the weak convergence is established. A Skorokhod construction yields the result.
\end{pfofThm}

The following lemma is useful for the proof of Theorem \ref{Thm.QuantProc} b).
\begin{lem}\label{lemma for the boundary of the quantile function}
For every $\delta>0$, there exists $0<a<1$ such that for large $n$,
\begin{equation*}
	P\Bigl\{\, U_0 \bigl( \frac{aN}{ks}\bigr) \leq X_{N-[ks]:N} \leq U_0 \bigl( \frac{N}{aks}\bigr) \; \mbox{ for } \frac{1}{2k}\leq s \leq 1 \Bigr\} 	> 1-\delta.
\end{equation*}
\end{lem}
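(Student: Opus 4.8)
The plan is to control the order statistic $X_{N-[ks]:N}$ from above and below by comparing the empirical tail at the deterministic thresholds $U_0(aN/(ks))$ and $U_0(N/(aks))$ with its expectation, uniformly over the range $1/(2k)\le s\le 1$. Write $T_N(x):=\sum_{i=1}^n\sum_{j=1}^m\one_{\{X_{i,j}>U_0(x)\}}$ for the number of exceedances of the level $U_0(x)$ among all $N$ observations; then $\{X_{N-[ks]:N}\le U_0(N/(aks))\}=\{T_N(N/(aks))\le[ks]\}$ and, similarly, $\{X_{N-[ks]:N}\ge U_0(aN/(ks))\}=\{T_N(aN/(ks))\ge[ks]\}$. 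So it suffices to show that, with probability at least $1-\delta$ for large $n$, simultaneously over $1/(2k)\le s\le 1$ we have $T_N(N/(aks))\le[ks]$ and $T_N(aN/(ks))\ge[ks]$.

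The key estimate is on the expectation. By the sharpened scedasis condition (ii) and the definition of $C_j$, for a deterministic level of the form $U_0(N/(k\lambda s))$ one gets
\begin{equation*}
	\mathbb{E}\,T_N\!\Bigl(\tfrac{N}{k\lambda s}\Bigr)=\sum_{i,j}\bigl(1-F_{i,j}(U_0(\tfrac{N}{k\lambda s})))\bigr)=\bigl(1-F_0(U_0(\tfrac{N}{k\lambda s}))\bigr)\sum_{i,j}\ci{j}\,\bigl(1+o(1)\bigr)=\frac{k\lambda s}{N}\cdot N\cdot\frac1m\sum_{j=1}^m C_j(1)\,\bigl(1+o(1)\bigr)=k\lambda s\,\bigl(1+o(1)\bigr),
\end{equation*}
using $\sum_j C_j(1)=1$, where the $o(1)$ is uniform in $s$ by condition (iv) (it absorbs the $A_1$ term and the regular-variation error of $1-F_0\circ U_0$). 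Choosing $\lambda=a<1$ for the upper bound makes the mean of $T_N(N/(aks))$ equal to $as\cdot k(1+o(1))<ks$ with a fixed multiplicative gap; choosing $\lambda=1/a>1$ for the lower bound makes the mean of $T_N(aN/(ks))$ equal to $(s/a)\cdot k(1+o(1))>ks$ with a fixed gap. The remaining task is to show the random fluctuation of $T_N$ around its mean is, uniformly in $s$, of smaller order than the gap $\asymp ks$. This is exactly a (weighted, sequential) tail empirical process bound: $T_N$ is a sum of independent (across $i$) Bernoulli-type contributions, and the fluctuation is $O_P(\sqrt{ks})$ pointwise, hence $o_P(ks)$ for $s$ bounded below by $1/(2k)$; uniformity over $s\in[1/(2k),1]$ follows from the tightness of the (standardized) tail empirical process already invoked in part a) of Theorem \ref{Thm.QuantProc} (via \citet{EinmahlEtAl2016}), applied at weight exponent $\eta=0$. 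Taking $a$ close enough to $1$ that the gaps dominate the $o_P(1)$ terms with the prescribed probability $1-\delta$ finishes the argument.

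The main obstacle is the uniformity in $s$ down to $s=1/(2k)$: for the smallest $s$ the expected count $ks$ is of order one, so Gaussian-type fluctuation bounds are not automatically enough and one must be careful that the inequality $T_N(N/(aks))\le[ks]$ can still fail only on a small-probability event. This is handled by the fact that the relevant empirical process, after the standard time-change, is tight on $[1/(2k),T]\times[0,1]$ with a weight that degenerates at the origin only mildly (any $\eta<1/2$), so the supremum of the normalized fluctuation is $O_P(1)$; combined with the strictly-less-than-one factor $a$ (resp.\ strictly-greater-than-one factor $1/a$) this yields the one-sided bounds with probability $>1-\delta$. A secondary technical point is that the thresholds $U_0(aN/(ks))$ and $U_0(N/(aks))$ must lie in the domain where the second-order expansion and condition (ii) are valid; since $aN/(ks)\to\infty$ uniformly on the range, this holds for $n$ large.
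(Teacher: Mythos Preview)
Your route is genuinely different from the paper's, and the difference matters precisely at the boundary $s\approx 1/(2k)$ where your argument breaks down.

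The paper does not go through the tail empirical process at all. It uses the representation $X_{i,j}=U_{i,j}(Y_{i,j})$ with $Y_{i,j}$ standard Pareto, inverts the scedasis condition to obtain $U_0(t/M)\le U_{i,j}(t)\le U_0(Mt)$ for all $i,j$ and large $t$, and then sandwiches the pooled order statistic $Y_{N-[ks]:N}$ between marginal Pareto order statistics via $Y^{(1)}_{n-[ks]:n}\le Y_{N-[ks]:N}\le \max_j Y^{(j)}_{n-[ks/m]:n}$. The crucial input is then the Shorack--Wellner inequality (their p.~419), an \emph{exact} probability bound of the form $P\{\,bn/(ks)\le Y^{(j)}_{n-[ks]:n}\le n/(bks)$ for the full range of $s\,\}>1-\delta/2$, valid for i.i.d.\ uniform/Pareto samples. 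This is sharper than any Gaussian approximation and covers the extreme order statistics (including $s=1/(2k)$, i.e.\ the sample maximum) directly.

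Your argument invokes the weighted tail empirical process from part~a) of Theorem~\ref{Thm.QuantProc}. That gives, for any $\eta<1/2$, a bound $|k^{-1}T_N(N/(ku))-u|=O_P(u^\eta/\sqrt{k})$ uniformly in $u$. To conclude $T_N(N/(aks))\le[ks]$ you need this fluctuation to be smaller than the gap $(1/a-1)\,as$, i.e.\ $s^\eta/\sqrt{k}\ll s$, equivalently $s^{1-\eta}\gg k^{-1/2}$. At $s=1/(2k)$ this reads $k^{-(1-\eta)}\gg k^{-1/2}$, which forces $\eta>1/2$ and is therefore unavailable. In plain terms: for $s=1/(2k)$ you are asking that no observation exceed $U_0(2N/a)$, an event about the sample maximum, and a $\sqrt{k}$-rate Gaussian approximation cannot deliver that. (Incidentally, you want $a$ small, not ``close to $1$'': the gap is proportional to $1-a$.) A direct Bernstein/Markov argument for the small-$s$ range could be grafted on, but the paper's Shorack--Wellner route handles the entire range $[1/(2k),1]$ in one stroke and avoids the issue.
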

\begin{proof}
Condition \eqref{CondTrends} implies, by inversion, that there exists $M>1$ and $t_0 >0$ such that
\begin{equation}\label{IneqsQuantfcts}
	U_0\Bigl(\frac{t}{M}\Bigr)\leq U_{i,j}(t)\leq U_0(Mt)
\end{equation}
holds for all $t\geq t_0$.

Note that from \citet[][inequality 1, page 419]{ShorackWellner1986}, we get that for every $\delta>0$ there exists $0<b<1$ such that
\begin{equation}\label{BoundsStandardj}
	P\Bigl\{\, \frac{b\,n}{ks} \leq Y^{(j)}_{n-[ks]:n} \mbox{ for } \frac{1}{2k} \leq s \leq 1\, \mbox{ and } Y^{(j)}_{n-[ks]:n} \leq \frac{n}{kbs} \; \mbox{ for }  0\leq s \leq 1; \, j=1, 2, \ldots, m \Bigr\} 	> 1-\delta/2.
\end{equation}
Note also that
\begin{equation}\label{Orderings}
	Y_{n-[ks]:n}^{(1)} \leq Y_{N-[ks]:N} \leq \max_{1\leq j\leq m} Y_{n- \bigl[\frac{ks}{m}\bigr]:n }^{(j)}
\end{equation}
Further, using \eqref{IneqsQuantfcts}, with probability tending to 1,
\begin{equation*}
	U_0\Bigl(\frac{1}{M}\,Y_{N-[ks]:N} \Bigr) \leq X_{N-[ks]:N} \leq U_0\bigl(M\,Y_{N-[ks]:N} \bigr) .
\end{equation*}
The result follows combining \eqref{IneqsQuantfcts}, \eqref{BoundsStandardj} and \eqref{Orderings}.
\end{proof}

\begin{pfofThm} \textbf{\ref{Thm.QuantProc}} {\bfseries b) Tail empirical quantile function}\mbox{}\\

We start from \eqref{TailEmp_j} in Theorem \ref{Thm.QuantProc}a). By taking $t_j=1$ and aggregating over $1\leq j\leq m$, we get that for any $0\leq\eta<1/2$, as $n\to\infty$, almost surely,

\begin{equation}\label{TailEmp}
 \sup_{x_0\leq x<x_1}(1+\gamma x)^{\eta/\gamma}\abs{\sqrt{k}\suit{\mathbb{P}_n(x)-(1+\gamma x)^{-1/\gamma}}-\sum_{j=1}^m W_j((1+\gamma x)^{-1/\gamma},C_j(1))}\longrightarrow 0,
\end{equation}
where
\[\mathbb{P}_n(x)=\frac{1}{k}\sum_{i=1}^n\sum_{j=1}^m \one_{\Bigl\{\frac{X_{i,j}-b_0\suit{\frac N k}}{a_0\suit{\frac N k}}>x \Bigr\}},\]
and $x_0>-1/\gamma_+$ and $x_1=1/(-\gamma_-)$.
As a consequence, for  $\delta_1>0$,
\begin{equation}\label{TailEmp2}
P\Bigl\{\, \sup_{x_0\leq x<x_1}(1+\gamma x)^{\eta/\gamma}\abs{\sqrt{k}\suit{\mathbb{P}_n(x)-(1+\gamma x)^{-1/\gamma}}-\sum_{j=1}^m W_j((1+\gamma x)^{-1/\gamma},C_j(1))}>\delta_1\Bigl\}\longrightarrow 0,
\end{equation}

We remark that the region $x_0\leq x<x_1$ has different implications for $\gamma\geq 0$ and $\gamma<0$. For $\gamma\geq 0$, it implies that $1+\gamma x\geq 1+\gamma x_0>0$, i.e. $1+\gamma x$ is bounded away from zero. For $\gamma<0$, $1+\gamma x>1+\gamma x_1=0$. Hence $1+\gamma x>0$ but not necessarily bounded away from zero. On the other hand, $1+\gamma x\leq 1+\gamma x_0$, i.e. $1+\gamma x$ is bounded away from $\infty$.

Then, split the range of $s$ in two subintervals, $[1/(2k),s_0]$ and $[s_0,T]$, where $s_0$ is a sufficiently low but fixed constant. The upper bound of $s_0$ will be determined throughout the proof.

For the range $[s_0,T]$ we use (\ref{TailEmp}) and Vervaat's Lemma \citep[cf. e.g. Appendix A of][]{deHaanFerreira2006} with $x_n(s):= \mathbb{P}_n(s)$ and $x_n^\leftarrow (s) := \bigl(X_{N-[ks]:N}-b_0(N/k) \bigr)/a_0(N/k)$. We then obtain the statement in (\ref
{QuantOverall}), with  the `sup'  taken over  $[s_0,T]$.

For $s\in [1/(2k), s_0]$, we first deal with the Gaussian processes term. Let $W_0$ be a univariate standard Wiener process. It is well-known (and follows from the law of the iterated logarithm) that for every $\tilde\delta>0$ there exists an $s_0$, such that
\begin{equation*}
	P\Bigl\{ \,\sup_{0<s\leq s_0} \frac{|W_0(s)|}{s^{\eta}}<\tilde\delta \Bigr\}>1-\tilde\delta.
\end{equation*}
Now $W_j(\,\cdot\, , C_j(1))\stackrel{d}{=}\sqrt{C_j(1)}W_0$ for all $j$. Hence for a given $\delta>0$, there exists an $s_0(\delta)$, such that for all $s_0\leq s_0(\delta)$,
\begin{equation}\label{wien}
	P\biggl\{\,\sup_{0<s\leq s_0}   s^{-\eta} \left|\sum_{j=1}^m W_j(s, C_j(1))\right|<\delta \biggr\}>1-\delta.
\end{equation}

Hence, we shall concentrate on proving that with probability larger than $1-\delta$, with a proper choice of $s_0$, for large $n$,
\begin{equation} \label{ineqtqtoprove}
\sup_{\frac{1}{2k}\leq s\leq s_0} s^{\gamma+\frac{1}{2}+\eps}\sqrt{k}\suit{\frac{X_{N-[ks]:N}-b_0\suit{\frac N k}}{a_0\suit{\frac N k}}-\frac{s^{-\gamma}-1}{\gamma}}\leq \delta,
\end{equation}
and
\begin{equation} \label{ineqtqtoprove2}
\inf_{\frac{1}{2k}\leq s\leq s_0} s^{\gamma+\frac{1}{2}+\eps}\sqrt{k}\suit{\frac{X_{N-[ks]:N}-b_0\suit{\frac N k}}{a_0\suit{\frac N k}}-\frac{s^{-\gamma}-1}{\gamma}}\geq -\delta.
\end{equation}

Again, we split the range of $s$ in two subintervals, $[1/(2k),t_n]$ and $(t_n,s_0]$, where $t_n$ depends only on the constant $a$ in Lemma \ref{lemma for the boundary of the quantile function} (eventually depending on $\delta$) and a sufficiently small $\xi>0$, although differently for the upper and lower bounds in \eqref{ineqtqtoprove}:
\[
\sqrt{k}t_n^{1/2+\varepsilon}:=\left\{\begin{array}{ll}
\delta\Delta_1^{-1}\text{ with } \Delta_1:=\frac{a^{-\gamma}-1}{\gamma}(1+\xi)>0, \text{for the upper bound},\\
\delta\Delta_2^{-1}\text{ with } \Delta_2:=\frac{1-a^\gamma}{\gamma}(1+\xi)>0, \text{for the lower bound}.
\end{array}\right.
\]
\emph{(A) Upper bound and $s\in[(2k)^{-1},t_n]$:} For simplicity, we assume that $A_0$ is eventually positive in the rest of the proof. Corollary 2.3.7 in \cite{deHaanFerreira2006} and Lemma \ref{lemma for the boundary of the quantile function} imply: for all $\varepsilon,\delta,\theta>0$, there exists $0<a<1$ such that for large $n$, with probability at least $1-\delta$,
\begin{eqnarray*}
\lefteqn{\frac{X_{N-[ks]:N}-b_0\suit{\frac N k}}{a_0\suit{\frac N k}}-\frac{s^{-\gamma}-1}{\gamma}}\\
&\leq& \frac{(as)^{-\gamma}-s^{-\gamma}}{\gamma}+\overline\Psi_{\gamma,\rho}\left(\frac{1}{as}\right)A_0\suit{\frac N k}+\suit{as}^{-\gamma-\rho-\theta} A_0\suit{\frac N k}\\
&\leq& s^{-\gamma}\left\{\frac{a^{-\gamma}-1}{\gamma}+s^{-\rho-\theta}K\,A_0\suit{\frac N k}\right\},
\end{eqnarray*}
for some $K>0$. Hence,
\begin{multline}
s^{\gamma+1/2+\varepsilon}\sqrt{k}\left(\frac{X_{N-[ks]:N}-U_0\suit{\frac N k}}{a_0\suit{\frac N k}}-\frac{s^{-\gamma}-1}{\gamma}\right)
\leq s^{1/2+\varepsilon}\sqrt{k}\left\{\frac{a^{-\gamma}-1}{\gamma}+s^{-\rho-\theta}K\,A_0\suit{\frac N k}\right\}\\
\leq\frac{\delta}{\Delta_1}\left\{\frac{a^{-\gamma}-1}{\gamma}+s^{-\rho-\theta}K\,A_0\suit{\frac N k}\right\}\leq \delta,
\end{multline}
for $n$ large, uniformly in $s\in[(2k)^{-1},t_n]$, since $\sup_{s\in[(2k)^{-1},\, t_n]}s^{-\rho-\theta}A_0\suit{\frac N k}\to0$ choosing $\theta<-\rho$.\\
\emph{(B) Upper bound and $s\in(t_n,s_0]$:} We prove that,  with probability at least $1-\delta$, for  large $n$,
\begin{equation}\label{Pnineqtoprove}
\mathbb{P}_n\left(\frac{s^{-\gamma}-1}{\gamma}+\frac{\delta}{\sqrt{k}}s^{-\gamma-1/2-\varepsilon}\right)\leq s,\quad \text{ for all }s\in(t_n,s_0],
\end{equation}
which implies the upper bound in \eqref{ineqtqtoprove}.

We start by giving some technical relations that provide the constants to determine an upper bound of $s_0$. Therefore note that for large enough $\eta$ there exists $\eta'$ with $1-\eta<1-\eta'<1/2+\varepsilon$, such that the following hold:
\begin{eqnarray}
\left(1+\frac{\delta\gamma}{\sqrt{k}}s^{-1/2-\varepsilon}\right)^{-1/\gamma}&<&\left(1+\frac{\delta\gamma}{\sqrt{k}}s^{-(1-\eta')}\right)^{-1/\gamma},\nonumber\\ \sup_{s\in(t_n,s_0]}\frac{s^{-(1-\eta')}}{\sqrt{k}}&\leq& \frac{t_n^{-(1-\eta')}}{\sqrt{k}}=\frac{t_n^{\eta'-1/2+\varepsilon}}{\sqrt{k}t_n^{1/2+\varepsilon}}\to 0,\nonumber\\
\left(1+\frac{\delta\gamma}{\sqrt{k}}s^{-(1-\eta')}\right)^{-1/\gamma}&\leq&1-c_I\frac{\delta}{\sqrt{k}}s^{-(1-\eta')}\text{ for some } 0<c_I\leq 1 \text{ and large } n,\nonumber\\
\left(1-\frac{\delta\gamma}{\sqrt{k}}s^{-(1-\eta')}\right)^{-1/\gamma}&\geq& 1+c_{II}\frac{\delta}{\sqrt{k}}s^{-(1-\eta')}\text{ for some } 0<c_{II}\leq 1 \text{ and large } n,\label{eta'ineq4}
\end{eqnarray}
where the last two inequalities follow from the inequalities $(1+\gamma x)^{-1/\gamma}\leq 1-c_Ix$ and $(1-\gamma x)^{-1/\gamma}\geq 1+c_{II}x$ respectively, for $0<x<\min(1,1/(-\gamma_-)$ and some $0<c_I,c_{II}\leq 1$. Take $s_0\leq (\delta c_I/(1+\delta_1))^{(\eta-\eta')^{-1}}$. 

We intend to apply \eqref{TailEmp2} with $x$ replaced by $\gamma^{-1}(s^{-\gamma}-1)+\delta k^{-1/2}
s^{-\gamma-1/2-\varepsilon}$. For this, note that,
\[
1+\gamma\left(\frac{s^{-\gamma}-1}{\gamma}+\frac{\delta}{\sqrt{k}}
s^{-\gamma-1/2-\varepsilon}\right)=s^{-\gamma}\left(1+\frac{\delta\gamma}{\sqrt{k}}s^{-1/2-\varepsilon}\right)
\]
and, for $s\in(t_n,s_0]$ and $\gamma>0$ the right-hand side is larger or equal to $s_0^{-\gamma}\left(1+\delta\gamma k^{-1/2}s_0^{-1/2-\varepsilon}\right)$, consequently bounded away from zero. For $\gamma<0$ the inequality is reversed and the expression is bounded above. Hence,
\begin{multline*}
\mathbb{P}_n\left(\frac{s^{-\gamma}-1}{\gamma}+\frac{\delta}{\sqrt{k}}s^{-\gamma-1/2-\varepsilon}\right)\\
\leq s\left(1+\frac{\delta\gamma}{\sqrt{k}}s^{-1/2-\varepsilon}\right)^{-1/\gamma}+\frac{1}{\sqrt{k}}\widetilde{W}\left(s\left(1+\frac{\delta\gamma}{\sqrt{k}}s^{-1/2-\varepsilon}\right)^{-1/\gamma}\right)+\frac{\delta_1}{\sqrt{k}}s^{\eta}\left(1+\frac{\delta\gamma}{\sqrt{k}}s^{-1/2-\varepsilon}\right)^{-\eta/\gamma}\\
\leq s\left(1-c_I\frac{\delta}{\sqrt{k}}s^{-(1-\eta')}\right)+\frac{1+\delta_1}{\sqrt{k}}s^{\eta}=s-\frac{s^{\eta}}{\sqrt{k}}\left(c_I\delta s^{\eta'-\eta}-(1+\delta_1)\right)
\end{multline*}
with $\widetilde{W}(s):=\sum_{j=1}^{m}W_j(s,C_j(1))$ and where for the second inequality we have applied \eqref{eta'ineq4}, $\eta<1/2$, \eqref{wien} and the fact that
\[\left(1+\frac{\delta\gamma}{\sqrt{k}}s^{-1/2-\varepsilon}\right)^{-1/\gamma}\leq1.
\]
It remains to check that $c_I\delta s^{\eta'-\eta}-(1+\delta_1)\geq 0$ which holds by the choice of $s_0$.\\
\emph{(C) Lower bound and $s\in[(2k)^{-1},t_n]$:} Corollary 2.3.7 in \cite{deHaanFerreira2006} and Lemma \ref{lemma for the boundary of the quantile function} imply: for all $\varepsilon,\delta,\theta>0$, there exists $0<a<1$ such that for large $n$, with probability at least $1-\delta$,
\begin{eqnarray*}
\lefteqn{\frac{X_{N-[ks]:N}-b_0\suit{\frac N k}}{a_0\suit{\frac N k}}-\frac{s^{-\gamma}-1}{\gamma}}\\
&\geq& \frac{(a/s)^\gamma-s^{-\gamma}}{\gamma}+\overline\Psi_{\gamma,\rho}\left(\frac{a}{s}\right)A_0\suit{\frac N k}-\suit{\frac{a}{s}}^{\gamma+\rho+\theta} A_0\suit{\frac N k}\\
&\geq& s^{-\gamma}\left\{\frac{a^\gamma-1}{\gamma}-s^{-\rho-\theta}K\,A_0\suit{\frac N k}\right\},\end{eqnarray*}
for some $K>0$, hence,
\begin{multline}
s^{\gamma+1/2+\varepsilon}\sqrt{k}\left(\frac{X_{N-[ks]:N}-b_0\suit{\frac N k}}{a_0\suit{\frac N k}}-\frac{s^{-\gamma}-1}{\gamma}\right)
\geq s^{1/2+\varepsilon}\sqrt{k}\left\{\frac{a^\gamma-1}{\gamma}-s^{-\rho-\theta}K\,A_0\suit{\frac N k}\right\}\\
\geq\frac{\delta}{\Delta_2}\left\{\frac{a^\gamma-1}{\gamma}-s^{-\rho-\theta}K\,A_0\suit{\frac N k}\right\}\geq -\delta,
\end{multline}
for large $n$, uniformly in $s\in[(2k)^{-1},t_n]$, since $\sup_{s\in[(2k)^{-1},\,t_n]}s^{-\rho-\theta}A_0\suit{\frac N k}\to0$ choosing $\theta<-\rho$.\\
\emph{(D) Lower bound and $s\in(t_n,s_0]$:} We prove that,  with probability at least $1-\delta$, for  large $n$,
\begin{equation}\label{Pnineqtoprove}
\mathbb{P}_n\left(\frac{s^{-\gamma}-1}{\gamma}-\frac{\delta}{\sqrt{k}}s^{-\gamma-1/2-\varepsilon}\right)\geq s+\frac 1 k,\quad \text{ for all }s\in(t_n,s_0],
\end{equation}
which implies the lower bound in \eqref{ineqtqtoprove}.
Similarly as in (B) apply \eqref{TailEmp2} with $x$ replaced by $\gamma^{-1}(s^{-\gamma}-1)-\delta k^{-1/2}
s^{-\gamma-1/2-\varepsilon}$. Note that,
\[
1+\gamma\left(\frac{s^{-\gamma}-1}{\gamma}-\frac{\delta}{\sqrt{k}}
s^{-\gamma-1/2-\varepsilon}\right)=s^{-\gamma}\left(1-\frac{\delta\gamma}{\sqrt{k}}s^{-1/2-\varepsilon}\right)
\]
and, for $s\in(t_n,s_0]$ and $\gamma>0$ the right-hand side is larger or equal to $s_0^{-\gamma}\left(1-\gamma\Delta_2\right)>0$ and consequently bounded away from zero. For $\gamma<0$ the inequality is reversed and the expression is bounded above.
Hence,
\begin{multline}
\mathbb{P}_n\left(\frac{s^{-\gamma}-1}{\gamma}-\frac{\delta}{\sqrt{k}}s^{-\gamma-1/2-\varepsilon}\right)\\ \geq s\left(1-\frac{\delta\gamma}{\sqrt{k}}s^{-1/2-\varepsilon}\right)^{-1/\gamma}+\frac{1}{\sqrt{k}}\widetilde{W}\left(s\left(1-\frac{\delta\gamma}{\sqrt{k}}s^{-1/2-\varepsilon}\right)^{-1/\gamma}\right)-\frac{\delta_1}{\sqrt{k}}s^{\eta}\left(1-\frac{\delta\gamma}{\sqrt{k}}s^{-1/2-\varepsilon}\right)^{-\eta/\gamma}\\
\geq s\left(1+c_{II}\frac{\delta}{\sqrt{k}}s^{-(1-\eta')}\right)-a^{-\eta}(1+\xi)^{-\eta/\gamma}(1+\delta_1)\frac{s^\eta}{\sqrt{k}}\\
=s+\frac{s^{\eta}}{\sqrt{k}}\left(c_{II}\delta s^{\eta'-\eta}-a^{-\eta}(1+\xi)^{-\eta/\gamma}(1+\delta_1)\right)\label{proofD}
\end{multline}
where we have used in particular \eqref{eta'ineq4}.
It remains to check that the right-hand side of \eqref{proofD} is larger or equal to $s+1/k$ which is equivalent to $s^{\eta}\sqrt{k}\left(c_{II}\delta s^{\eta'-\eta}-a^{-\eta}(1+\xi)^{-\eta/\gamma}(1+\delta_1)\right)\geq 1$. This holds by the choice of $\eta'<\eta$ and choosing $s_0\leq \delta c_{II}/((m/a)^\eta(1+\xi)^{-\eta/\gamma}(1+\delta_1))^{(\eta-\eta')^{-1}}$.

The theorem is thus proved by choosing $s_0$ with combining all aforementioned upper bounds, i.e. $s_0\leq \min(s_0(\delta), (\delta c_I/(1+\delta_1))^{(\eta-\eta')^{-1}}, \delta c_{II}/((m/a)^\eta(1+\xi)^{-\eta/\gamma}(1+\delta_1))^{(\eta-\eta')^{-1}} )$
\end{pfofThm}

\begin{pfofThm} \textbf{\ref{ThmCj}}

Fix $j\in \{1, \ldots, m\}$. Replace $x$ in \eqref{TailEmp_j} with $\Bigl( X_{N-k:N}-b_0\bigl(\frac{N}{k}\bigr) \Bigr)/a_0\bigl(\frac{N}{k}\bigr)$ and use condition (iii) jointly with Theorem 2.3.8 of \citet{deHaanFerreira2006} to get
\begin{align*}
	\sqrt{k}\biggl\{ \frac{1}{k} \sumab{i=1}{nt} \one_{\bigl\{  X_{i,j}>  X_{N-k:N}\bigr\} }
	&- \Bigl(1+\gamma\, \frac{ X_{N-k:N}- b_0\bigl(\frac{N}{k}\bigr)}{a_0\bigl(\frac{N}{k}\bigr)} \Bigr)^{-1/\gamma} C_j(t) \\
	& -W_j \biggl( \Bigl(1+\gamma\, \frac{ X_{N-k:N}- b_0\bigl(\frac{N}{k}\bigr)}{a_0\bigl(\frac{N}{k}\bigr)} \Bigr)^{-1/\gamma}, \, C_j(t)\biggr) \biggr\}\\
	&= o_p \biggl( \Bigl(1+\gamma\, \frac{ X_{N-k:N}- b_0\bigl(\frac{N}{k}\bigr)}{a_0\bigl(\frac{N}{k}\bigr)} \Bigr)^{-\eta/\gamma} \biggr).
\end{align*}
Now by \eqref{QuantOverall}
\begin{equation*}
	\sqrt{k}\frac{ X_{N-k:N}- b_0\bigl(\frac{N}{k}\bigr)}{a_0\bigl(\frac{N}{k}\bigr)} \conv{P} \sumab{j=1}{m} W_j \bigl(1, C_j(1) \bigr)
\end{equation*}
and hence\begin{equation*}
\sqrt{k} \Bigl\{ 1-\Bigl(1+\gamma\, \frac{ X_{N-k:N}- b_0\bigl(\frac{N}{k}\bigr)}{a_0\bigl(\frac{N}{k}\bigr)} \Bigr)^{-1/\gamma} \Bigr\}-\sqrt{k}\frac{X_{N-k:N}- b_0\bigl(\frac{N}{k}\bigr)}{a_0\bigl(\frac{N}{k}\bigr)}
\conv{P} 0.
\end{equation*}

\noindent Combining this we obtain
\begin{equation*}
	\sup_{0\leq t \leq 1}|\sqrt{k} \biggl(\frac{1 }{k} \sumab{i=1}{nt} \one_{\bigl\{  X_{i,j}>  X_{N-k:N}\bigr\}} -C_j(t)\biggr) 	-\{W_j	\bigl(1, C_j(t)\bigr) -  C_j(t) \sumab{r=1}{m} W_r\bigl(1, C_r(1) \bigr)\}|\conv{P} 0,
\end{equation*}
which yields \eqref{first3}.

To prove \eqref{CovEst}, we first prove the following limit relation: as $n\to\infty$,
\begin{equation}\label{Varlimitdet}
\frac{1}{k}\sum_{i=1}^{nt}P\suit{\frac{X_{i,j_1}-b_0\suit{\frac N k}}{a_0\suit{\frac N k}}>\frac{w_1^{-\gamma}-1}{\gamma},\frac{X_{i,j_2}-b_0\suit{\frac N k}}{a_0\suit{\frac N k}}>\frac{w_2^{-\gamma}-1}{\gamma}}\to \frac{1}{m}\int_0^t R_{j_1,j_2}(w_1c(u,j_1),w_2c(u,j_2))du,
\end{equation}
for fixed $(t,w_1,w_2)\in[0,1]\times [0,T]^2$. Since $X_{i,j}=U_{i,j}(Y_{i,j})$ with $Y_{i,j}$ standard Pareto distributed random variables, given any $\varepsilon>0$, for sufficiently large $n$,
\begin{align*}
&P\suit{\frac{X_{i,j_1}-b_0\suit{\frac N k}}{a_0\suit{\frac N k}}>\frac{w_1^{-\gamma}-1}{\gamma},\frac{X_{i,j_2}-b_0\suit{\frac N k}}{a_0\suit{\frac N k}}>\frac{w_2^{-\gamma}-1}{\gamma}}\\
=&P\suit{\frac{k}{n}Y_{1,j_1}> \frac{m}{\frac{N}{k}\suit{1-F_{i,j_1}\suit{b_0\suit{\frac N k}+a_0\suit{\frac N k}\frac{w_1^{-\gamma}-1}{\gamma}}}},\frac{k}{n}Y_{1,j_2}> \frac{m}{\frac{N}{k}\suit{1-F_{i,j_2}\suit{b_0\suit{\frac N k}+a_0\suit{\frac N k}\frac{w_2^{-\gamma}-1}{\gamma}}}}}\\
\leq& P\suit{\frac{k}{n}Y_{1,j_1}> \frac{m}{w_1c\suit{\frac{i}{n},j_1}(1+\varepsilon)},\frac{k}{n}Y_{1,j_2}> \frac{m}{w_2c\suit{\frac{i}{n},j_2}(1+\varepsilon)}}.
\end{align*}
The definition of $R_{j_1,j_2}$ implies that, as $n\to\infty$,
$$\frac{n}{k}P\suit{\frac{k}{n}Y_{1,j_1}> \frac{1}{v_1},\frac{k}{n}Y_{1,j_2}> \frac{1}{v_2}}\to R_{j_1,j_2}(v_1,v_2),$$
uniformly for $(v_1,v_2)\in [0,V]^2$ with any fixed $V>0$. By the continuity and boundedness of $c$ and of $R$, as $n\to\infty$,
\begin{multline}
\frac{n}{k} P\suit{\frac{k}{n}Y_{1,j_1}> \frac{m}{w_1c\suit{\frac{i}{n},j_1}(1+\varepsilon)},\frac{k}{n}Y_{1,j_2}> \frac{m}{w_2c\suit{\frac{i}{n},j_2}(1+\varepsilon)}}\\
-R_{j_1,j_2}\suit{\frac{w_1c\suit{\frac{i}{n},j_1}(1+\varepsilon)}{m},\frac{w_2c\suit{\frac{i}{n},j_2}(1+\varepsilon)}{m}}\to 0,
\end{multline}
uniformly in $i$, $w_1$ and $w_2$. Hence, as $n\to\infty$, uniformly in $(t,w_1,w_2)\in[0,1]\times [0,T]^2$,
\begin{align*}
&\frac{1}{k}\sum_{i=1}^{nt}P\suit{\frac{X_{i,j_1}-b_0\suit{\frac N k}}{a_0\suit{\frac N k}}>\frac{w_1^{-\gamma}-1}{\gamma},\frac{X_{i,j_2}-b_0\suit{\frac N k}}{a_0\suit{\frac N k}}>\frac{w_2^{-\gamma}-1}{\gamma}}\\
\leq& \frac{1}{k}\sum_{i=1}^{nt}P\suit{\frac{k}{n}Y_{1,j_1}> \frac{m}{w_1c\suit{\frac{i}{n},j_1}(1+\varepsilon)},\frac{k}{n}Y_{1,j_2}> \frac{m}{w_2c\suit{\frac{i}{n},j_2}(1+\varepsilon)}}\\
=&\int_0^t\frac{n}{k}P\suit{\frac{k}{n}Y_{1,j_1}> \frac{m}{w_1c\suit{\frac{[nu]}{n},j_1}(1+\varepsilon)},\frac{k}{n}Y_{1,j_2}> \frac{m}{w_2c\suit{\frac{[nu]}{n},j_2}(1+\varepsilon)}}du.\\
\to &\int_0^tR_{j_1,j_2}\suit{\frac{w_1c\suit{u,j_1}(1+\varepsilon)}{m},\frac{w_2c\suit{u,j_2}(1+\varepsilon)}{m}}du.
\end{align*}
A lower bound is derived similarly. Hence, by the homogeneity of the function $R$, \eqref{Varlimitdet} holds.

Next we prove for fixed $(t,w_1,w_2)\in [0,1]\times[0,T]^2$, as $n\to\infty$,
\begin{equation}\label{Varlimitrandom}
\frac{1}{k}\sum_{i=1}^{nt}\one_{\set{\frac{X_{i,j_1}-b_0\suit{\frac N k}}{a_0\suit{\frac N k}}>\frac{w_1^{-\gamma}-1}{\gamma},\frac{X_{i,j_2}-b_0\suit{\frac N k}}{a_0\suit{\frac N k}}>\frac{w_2^{-\gamma}-1}{\gamma}}}\stackrel{P}{\to} \frac{1}{m}\int_0^t R_{j_1,j_2}(w_1c(u,j_1),w_2c(u,j_2))du.
\end{equation}
We check the variance of the left hand side: using \eqref{Varlimitdet}, as $n\to\infty$,
\begin{align*}
&Var\suit{\frac{1}{k}\sum_{i=1}^{nt}\one_{\set{\frac{X_{i,j_1}-b_0\suit{\frac N k}}{a_0\suit{\frac N k}}<\frac{w_1^{-\gamma}-1}{\gamma},\frac{X_{i,j_2}-b_0\suit{\frac N k}}{a_0\suit{\frac N k}}<\frac{w_2^{-\gamma}-1}{\gamma}}}}\\
=&\sum_{i=1}^{nt}\frac{1}{k^2}Var\suit{\one_{\set{\frac{X_{i,j_1}-b_0\suit{\frac N k}}{a_0\suit{\frac N k}}<\frac{w_1^{-\gamma}-1}{\gamma},\frac{X_{i,j_2}-b_0\suit{\frac N k}}{a_0\suit{\frac N k}}<\frac{w_2^{-\gamma}-1}{\gamma}}}}\\
\leq &\sum_{i=1}^{nt}\frac{1}{k^2}P\suit{\frac{X_{i,j_1}-b_0\suit{\frac N k}}{a_0\suit{\frac N k}}<\frac{w_1^{-\gamma}-1}{\gamma},\frac{X_{i,j_2}-b_0\suit{\frac N k}}{a_0\suit{\frac N k}}<\frac{w_2^{-\gamma}-1}{\gamma}}\\
\leq &\frac{1}{km}(1+\varepsilon)\int_0^t R_{j_1,j_2}(w_1c(u,j_1),w_2c(u,j_2))du \to 0.
\end{align*}
Hence, by Chebyshev inequality, as $n\to\infty$,
\begin{multline}
\frac{1}{k}\sum_{i=1}^{nt}\one_{\set{\frac{X_{i,j_1}-b_0\suit{\frac N k}}{a_0\suit{\frac N k}}>\frac{w_1^{-\gamma}-1}{\gamma},\frac{X_{i,j_2}-b_0\suit{\frac N k}}{a_0\suit{\frac N k}}>\frac{w_2^{-\gamma}-1}{\gamma}}}\\
- \frac{1}{k}\sum_{i=1}^{nt}P\suit{\frac{X_{i,j_1}-b_0\suit{\frac N k}}{a_0\suit{\frac N k}}>\frac{w_1^{-\gamma}-1}{\gamma},\frac{X_{i,j_2}-b_0\suit{\frac N k}}{a_0\suit{\frac N k}}>\frac{w_2^{-\gamma}-1}{\gamma}}\stackrel{P}{\to}0.
\end{multline}
Then, \eqref{Varlimitdet} implies \eqref{Varlimitrandom}.

By the continuity of the right hand side of \eqref{Varlimitrandom} and the monotonicity of both sides of \eqref{Varlimitrandom} in $t$, $w_1$ and $w_2$, this result holds uniformly for $(t,w_1,w_2)\in[0,1]\times[0,T]^2$.

Finally, for fixed $(s_1,s_2)\in[0,T]^2$, by \eqref{QuantOverall}, as $n\to\infty$
$$\suit{1+\gamma\frac{X_{N-[ks_j]:N}-b_0\suit{\frac N k}}{a_0\suit{\frac N k}}}^{-1/\gamma}\stackrel{P}{\to} s_j,$$
for $j=1,2$. We can then replace $w_j$ in \eqref{Varlimitrandom} with
$\suit{1+\gamma\frac{X_{N-[ks_j]:N}-b_0\suit{\frac N k}}{a_0\suit{\frac N k}}}^{-1/\gamma},$
which yields \eqref{CovEst} for fixed $(t,s_1,s_2)\in[0,1]\times[0,T]^2$. The uniformity follows as before.
\end{pfofThm}


\begin{pfofCor} \textbf{\ref{CorWLoc}}
Under the null hypothesis, using Theorem \ref{ThmCj},  under a Skorokhod construction,
\begin{equation*}
	 \sqrt{k}\bigl( \hat{C}_j(t) - t \hat{C}_j(1)\bigr)
	= \sqrt{k}\bigl( \hat{C}_j(t) -C_j(t) \bigr) - t \sqrt{k} \bigl(\hat{C}_j(1) -C_j(1) \bigr)
\end{equation*}
converges uniformly $(0 \leq t \leq 1)$ to
\begin{align*}
 & W_j\bigl(1, C_j(t) \bigr) -C_j(t)\sumab{r=1}{m} W_r \bigl(1, C_j(1) \bigr) - t \Big[W_j \bigl(1, C_j(1) \bigr) - C_j(1) \sumab{r=1}{m} W_r \bigl( 1, C_r(1)\bigr) \Bigr]\\
 &= W_j\bigl(1, tC_j(1) \bigr) - t W_j\bigl(1, C_j(1) \bigr) \, \id \,\sqrt{C_j(1)}\, B(t).
\end{align*}
Then the result follows directly via Slutsky's theorem.
\end{pfofCor}

The proof of Theorem \ref{mle_theor} is deferred to the supplementary material. Here we only present the main steps of the proof.

We use ``local asymptotic normal theory", where the local log-likelihood and local score processes are fundamental, consisting of reparametrizations of the former with local parameter $h=(h_1,h_2)\in\mathds{R}^2$:
\begin{equation*}\label{def:gmusigmah}
\left\{\begin{array}{lll}
h_1&=&\sqrt k\left(\gamma_{N/k}-\gamma_0\right)\\
h_2&=&\sqrt k\left(\sigma_{N/k}/a_0(N/k)-1\right) \\
\end{array}\right.
\quad \Leftrightarrow\quad
\left\{\begin{array}{lll}
\gamma&=& \gamma_{N/k} = \gamma_0+ h_1/\sqrt{k}\\
\sigma&=& \sigma_{N/k} = a_0(\frac N k)(1+h_2/\sqrt{k}),
\end{array}\right.
\end{equation*}
\begin{eqnarray}
\widetilde L_{N,k}(h)&=&k\int_0^1\ell\left(\gamma_0+\frac{h_1}{\sqrt{k}},a_0(\frac N k)(1+\frac{h_2}{\sqrt{k}}),X_{N-[ks],N}-X_{N-k,N}\right)ds\nonumber\\
&=&k\int_0^1  \ell\left(\gamma_0+\frac{h_1}{\sqrt{k}},1+\frac{h_2}{\sqrt{k}},\frac{X_{N-[ks],N}-X_{N-k,N}}{a_0(\frac N k)}\right)ds-k\log a_0(\frac N k),\label{Ltilda}
\end{eqnarray}
and
\begin{equation}\label{locscoreproc}
\left\{\begin{array}{lll}
\frac{\partial \widetilde L_{N,k}}{\partial h_1}(h_1,h_2)&=&\frac{1}{\sqrt{k}}\frac{\partial L_{N,k}}{\partial \gamma}(\gamma,\sigma_{N/k})=0\\
\frac{\partial \widetilde L_{N,k}}{\partial h_2}(h_1,h_2)&=&\frac{1}{\sqrt{k}}\frac{\partial L_{N,k}}{\partial \sigma}(\gamma,\sigma_{N/k})=0.
\end{array}\right.
\end{equation}

The main steps of the proof are as follows:
\begin{description}
\item[a)] First prove that,
\begin{equation}\label{eq:Iexpansion}
\frac{\partial^2 \widetilde L_{N,k}(h)}{\partial h\partial h^T}=-I_{\gamma_0}+o_P(1),\quad I_{\gamma_0}=-\int_0^1 \frac{\partial^2 \ell}{\partial \theta \partial\theta^T}\left(\gamma_0,1,\frac{s^{-\gamma_0}-1}{\gamma_0}\right)ds,\quad \theta=(\gamma,\sigma)\in\real\times(0,\infty),
\end{equation}
uniformly in a large enough ball $H_n$ to ensure that it covers the true solution; $I_{\gamma_0}$ is the Fisher information matrix related to the approximate $GP_{\gamma_0,1}$ model
\[
I_{\gamma_0}= \left(\begin{array}{cc}
\frac{2}{1+3\gamma_0+2\gamma_0^2}&\frac{1}{1+3\gamma_0+2\gamma_0^2}\\ \frac{1}{1+3\gamma_0+2\gamma_0^2}&\frac{1}{1+2\gamma_0}
       \end{array}
 \right).
\]
$I_{\gamma_0}$ is positive definite, which implies that the local log-likelihood process is eventually strictly concave on $H_n$ with probability tending to 1.
\item[b)] Then, by integration one obtains an expansion for the local log-likelihood process (holding uniformly for $h$ in compact sets):
\begin{equation}\label{eq:locloglikexpansion}
\widetilde L_{N,k}(h)=\widetilde L_{N,k}(0)+h^T\frac{\partial \widetilde L_{N,k}}{\partial h} (0)-\frac 1 2h^TI_{\gamma_0}h+o_P(1)
\end{equation}
where
\begin{equation}\label{eq:partialDN}
\frac{\partial \widetilde L_{N,k}}{\partial h} (0)=\frac 1{\sqrt{k}} \sum_{i=1}^{k}\frac{\partial\ell}{\partial\theta}\left(\gamma_0,1,\frac{X_{N-i+1,N}-X_{N-k,N}}{a_0(\frac N k)}\right)\stackrel{d}{\to} N(0,\Sigma_{\gamma_0}).
\end{equation}
\item[c)] Finally the Argmax Theorem (van der Vaart 1998, Corollary 5.58) provides the result: let
\[\begin{array}{lll}
M_n(h)&=&\widetilde L_{N,k}(h)-\widetilde L_{N,k}(0),\\
M(h)&=&h^TN(0,\Sigma_{\gamma_0})-\frac{1}{2}h^TI_{\gamma_0}h ,\quad h\in\mathds{R}^2.
\end{array}\]
Then,
\[
\widehat h_n=\argmax _{h\in H_n} M_n(h)\stackrel{d}{\to} h=\argmax_{h\in\mathds{R}^2} M(h) \stackrel{d}{=} I_{\gamma_0}^{-1}N(0,\Sigma_{\gamma_0})
\]
provided $\widehat h_n$ is tight which holds as in Dombry and Ferreira (2018). Finally, note that
\[\widetilde L_{N,k}(\widehat h_n)=L_{N,k}\left(\gamma_0+\frac{\hat h_{n,1}}{\sqrt k}, a_0(\frac N k)(1+\frac{\hat h_{n,2}}{\sqrt k})\right) \]
and similarly for its derivatives.
\end{description}

\section*{Acknowledgements} John Einmahl holds the Arie Kapteyn Chair 2019-2022  and gratefully acknowledges the corresponding research support. Ana Ferreira was partially supported by FCT-Portugal: UID/Multi/04621/2019, UIDB/00006/2020 and SFRH/BSAB/142912/2018; IST:
P.5088. Laurens de Haan was financially supported by FCT - Funda\c{c}\~ao para a Ci\^encia e a Tecnologia, Portugal, through the projects UIDB/00006/2020 and PTDC/MAT-STA/28649/2017.
Cl\'audia Neves gratefully acknowledges support from EPSRC-UKRI Innovation Fellowship grant EP/S001263/1 and project FCT-UIDB/00006/2020.

\bibliography{RefGermanRain}
\bibliographystyle{apalike}

\section*{Supporting information}
The detailed proof of Theorem \ref{mle_theor}, referenced in Section \ref{Sec.Proofs}, as well as simulations showcasing finite sample performance of the proposed methods are available with this paper at \url{https://bit.ly/3aJFM6B}.

\end{document}